\theoremstyle{plain}
\newtheorem{prop}{Proposition}[section]
\newtheorem{lemma}[prop]{Lemma}
\newtheorem{thm}[prop]{Theorem}
\newtheorem*{thmnn}{Theorem}
\theoremstyle{definition}
\theoremstyle{remark}
\newtheorem{remark}[prop]{Remark}
\DeclareMathOperator{\PSL}{PSL}
\DeclareMathOperator{\PGamma}{P\Gamma}
\DeclareMathOperator{\Ima}{Im}
\DeclareMathOperator{\Rea}{Re}
\newcommand{\dec}{\text{dec}}
\newcommand{\parab}{\text{par}}
\newcommand\R{\mathbb{R}}
\newcommand\Z{\mathbb{Z}}
\newcommand\C{\mathbb{C}}
\newcommand{\h}{\mathbb{H}}
\newcommand{\mc}[1]{\mathcal #1}
\newcommand{\wt}{\widetilde}
\newcommand{\wh}{\widehat}
\DeclareMathOperator{\FE}{FE}
\DeclareMathOperator{\id}{id}
\DeclareMathOperator{\Fct}{Fct}
\DeclareMathOperator{\MCF}{MCF}
\newcommand{\sceq}{\mathrel{\mathop:}=}
\newcommand{\seqc}{\mathrel{=\mkern-4.5mu{\mathop:}}}
\newcommand{\bmat}[4]{\begin{bmatrix} #1&#2\\#3&#4\end{bmatrix}}
\newcommand{\textbmat}[4]{\left[\begin{smallmatrix} #1&#2 \\ #3&#4
\end{smallmatrix}\right]}
\begin{document}

\title[Period functions for Maass cusp forms for $\Gamma_0(p)$]{Period functions for Maass cusp forms for $\Gamma_0(p)$: \\ a transfer operator approach}
\author[A.\@ Pohl]{Anke D.\@ Pohl}
\address{ETH Z\"urich, Departement Mathematik, R\"amistrasse 101, CH-8092 Z\"urich}
\email{anke.pohl@math.ethz.ch}
\subjclass[2010]{Primary: 11F37, 37C30; Secondary: 37B10, 37D35, 37D40, 11F67}
\keywords{Maass cusp forms, transfer operator, period functions, symbolic dynamics, Hecke congruence subgroups}
\begin{abstract} 
We characterize the Maass cusp forms for Hecke congruence subgroups of prime level as $1$-eigenfunctions of a finite-term transfer operator.
\end{abstract}
\thanks{The author acknowledges the support by the SNF (Grant 200021-127145).}
\maketitle

\section{Introduction}

Let $\h$ denote the hyperbolic plane and let $\Gamma$ be a Fuchsian group. Maass cusp forms for $\Gamma$ are specific eigenfunctions of the Laplace-Beltrami operator acting on $L^2(\Gamma\backslash \h)$ which decay rapidly towards any cusp of $\Gamma\backslash \h$. They are of utmost importance in various fields of mathematics. To name but one example, the space $L^2_{\text{discrete}}(\Gamma\backslash\h)$ of the discrete spectrum is spanned by the Maass cusp forms for $\Gamma$ and the constant functions.

For the projective Hecke congruence subgroups
\[
\Gamma_p \sceq \PGamma_0(p) = \left\{ \bmat{a}{b}{c}{d} \in \PSL(2,\Z) \left\vert\ c \equiv 0 \mod p \vphantom{\bmat{a}{b}{c}{d}}\right.\right\}
\]
of any prime level $p$ we provide a dynamical approach to their Maass cusp forms via transfer operator families parametrized in the spectral parameter $s$. Our main theorem is as follows:

\begin{thmnn}
For $s\in\C$, $0<\Rea s<1$, the space of Maass cusp forms for $\Gamma_p$ with eigenvalue $s(1-s)$ is isomorphic (as vector space) to the space of highly regular $1$-eigenfunctions of the transfer operator with parameter $s$. 
\end{thmnn}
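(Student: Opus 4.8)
The plan is to realize the stated isomorphism as a composition of several more elementary correspondences, in the spirit of the transfer-operator-to-automorphic-form dictionary. The first task is to set up a symbolic dynamics for the geodesic flow on $\Gamma_p\backslash\h$. Using that $\Gamma_p$ has index $p+1$ in $\PSL(2,\Z)$, with coset space identified with $\mP^1(\F_p)$ and exactly two cusps (represented by $\infty$ and $0$), I would assemble a fundamental domain from $p+1$ translates of the standard one and choose a cross section for the flow whose first-return map $F$ is piecewise a M\"obius transformation in $\Gamma_p$. The finite-term transfer operator $\mathcal{L}_s$ of the statement is then the weighted summation operator over the inverse branches of $F$, the weights being the $s$-th power of the derivative cocycle $x\mapsto|cx+d|^{-2s}$ of each branch. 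Before anything else one must verify that this coding is a genuine (essentially everywhere defined) cross section, so that $\mathcal{L}_s$ faithfully records the dynamics and has only finitely many branches.

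Next I would convert a highly regular $1$-eigenfunction of $\mathcal{L}_s$ into a \emph{period function}, that is, a solution of a finite-term functional equation on $\partial\h=\R\cup\{\infty\}$, simply by unfolding the definition of $\mathcal{L}_s$ branch by branch; this is bookkeeping once the coding is fixed, but the regularity class must be tracked carefully through it. From period functions I would pass to $\Gamma_p$-equivariant families of semi-analytic boundary functions (or distributions): the functional equation is exactly the cocycle/gluing condition that lets the locally defined pieces be assembled into a single $\Gamma_p$-invariant boundary object, and conversely restriction recovers the period function. Finally I would invoke a boundary-value correspondence for the hyperbolic Laplacian --- of the Poisson-transform type refined by Lewis--Zagier and Bruggeman--Lewis--Zagier --- to identify such boundary objects with $\Delta$-eigenfunctions $u$ on $\Gamma_p\backslash\h$ of eigenvalue $s(1-s)$, checking that each arrow is linear and that the composite and its reverse are mutually inverse.

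The step I expect to be the main obstacle is calibrating the two analytic classes so that the correspondence lands precisely on \emph{cusp} forms. The qualifier ``highly regular'' must be chosen so that the behaviour of the eigenfunction at the finitely many distinguished boundary points --- the cusps and the parabolic and elliptic fixed points occurring as branch endpoints --- matches exactly the rapid decay of $u$ toward every cusp. Here the level $p$ genuinely enters: the two inequivalent cusps $\infty$ and $0$ each impose a constraint, and their interaction, governed by the coset combinatorics $\Gamma_p\backslash\PSL(2,\Z)\cong\mP^1(\F_p)$, is what distinguishes this case from the classical $p=1$ situation. A secondary technical point is to establish the holomorphy/real-analyticity of the period functions across the branch endpoints needed for the boundary-value machinery to apply; this should follow from the eigenvalue equation together with growth control valid in the strip $0<\Rea s<1$, but it requires care.
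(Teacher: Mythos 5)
Your first half (cross section with finitely many $\Gamma_p$-M\"obius branches, finite-term transfer operator, unfolding of $\mc L_{F,s}f=f$ into a finite family of functional equations) matches the paper's setup, which imports the cross section and operator from \cite{Hilgert_Pohl, Pohl_Symdyn2d}. The gap is in the second half, at exactly the step you flag as ``the main obstacle'': you give no mechanism for landing on \emph{cusp} forms, and the mechanism you sketch is not the one that works. You propose to glue the locally defined pieces of an eigenfunction into a single $\Gamma_p$-\emph{invariant} boundary object and then apply a Poisson-type correspondence, cutting down to cusp forms by tuning ``highly regular'' so that endpoint behaviour matches rapid decay of $u$ at the cusps. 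This conflates invariants ($H^0$) with cocycles ($H^1$). The finite-term functional equation is not an invariance condition: in the paper it is shown to be a $1$-\emph{cocycle} condition relative to the presentation \eqref{presentation} of $\Gamma_p$ coming from the Ford-domain side pairing. Concretely, from a period function $f$ one sets $c_T\sceq 0$ and defines $c_{h_k}$ piecewise as $f_k$ on $\left(\tfrac{k}{p},\infty\right)$ and $-\tau_s(h_{k'})f_{k'}$ on $\left(-\infty,\tfrac{k}{p}\right)$; the smooth-extension condition (PF3) makes these genuine elements of $\mc V_s^{\omega*,\infty}$, the several branches of the eigenvalue equation are precisely what verifies the relations $h_{j'}h_j=\id$ and the triple relations among the $h_k$, and (PF4) is exactly the statement that the resulting cocycle is \emph{parabolic} at the second cusp, i.e.\@ that $c_{h_{p-1}T}$ is a $\tau_s$-coboundary of some $\psi\in\mc V_s^{\omega*,\infty}$.

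Once this is seen, cuspidality is not calibrated by hand through decay at branch endpoints --- indeed no growth or decay condition on the $f_j$ appears anywhere in the paper's definition of period functions. It is carried entirely by the coefficient module $\mc V_s^{\omega*,\infty}$ of semi-analytic \emph{smooth} vectors together with the parabolicity requirement, via the theorem of Bruggeman--Lewis--Zagier \cite{BLZ_part2} that $\MCF_s(\Gamma_p)\cong H^1_\parab(\Gamma_p;\mc V_s^{\omega*,\infty})$ for $0<\Rea s<1$; this external input, plus the explicit two-way constructions (period function $\to$ cocycle as above, and cocycle $\to$ period function via $f_k=c_{h_k}\vert_{I_k}$ and the unique $\psi$ solving the parabolic equation for $c_{h_{p-1}T}$, using the integral representation $c_g(t)=\int_{g^{-1}.\infty}^{\infty}[u,R(t,\cdot)^s]$), is what replaces your ``calibration''. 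By contrast, the invariant-eigendistribution/Poisson picture you invoke characterizes all $\Gamma_p$-invariant Laplace eigenfunctions of moderate growth (Eisenstein-type ones included), and your proposal supplies no way to isolate the cuspidal ones from endpoint regularity of the transfer-operator eigenfunction alone; handling precisely this difficulty is what the parabolic-cohomology formulation was built for. So the decisive step of the proof --- the isomorphism with $H^1_\parab(\Gamma_p;\mc V_s^{\omega*,\infty})$ and the appeal to \cite{BLZ_part2} --- is missing from your plan, and what you put in its place would not close the argument.
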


The required regularity of these eigenfunctions is specified in Theorem~\ref{mainthm_coarse} below. The transfer operators arise from a discretization of the geodesic flow on the orbifold $\Gamma_p\backslash\h$ which was specifically developed in \cite{Hilgert_Pohl, Pohl_Symdyn2d} for such a dynamical approach to Maass cusp forms. These transfer operators are finite sums of specific elements from $\Gamma_p$ acting via principal series representation on functions defined on certain intervals in the geodesic boundary of $\h$. Therefore their $1$-eigenfunctions are characterized as solutions of finite families of finite-term functional equations. The $1$-eigenfunctions in the main theorem can be understood as period functions for the Maass cusp forms for $\Gamma_p$.

In Section~\ref{sec_background} below we recall the discretization, provide the associated transfer operators and state a definition of period functions.  Employing the recent characterization of Maass cusp forms in parabolic $1$-cohomology by \cite{BLZ_part2} we prove in Section~\ref{proofmainthm} that period functions are in isomorphism with parabolic $1$-cocycle classes by constructing exactly one representative for each such class. 

The results in this article depend on a number of choices. In Section~\ref{sec_examp} we briefly show, for the lattice $\Gamma_3$, the effect of a different choice. A more detailed discussion of these variations as well as a generalization to other nonuniform Fuchsian groups will appear in a forthcoming article. 

Definitions of period functions for these lattices have been provided via other approaches as well, e.g.\@ by \cite{Deitmar_Hilgert} and \cite{Chang_Mayer_eigen} (see also \cite{Hilgert_Mayer_Movasati} and \cite{Fraczek_Mayer_Muehlenbruch}). Clearly, the spaces of these period functions are isomorphic to those provided here. It would be interesting to understand the precise isomorphism. 


\section{Discretization, transfer operators and period functions}\label{sec_background}

The definition of period functions we provide in this article relies on a specific discretization of the geodesic flow on $Y_p\sceq \Gamma_p\backslash\h$, constructed in \cite{Hilgert_Pohl, Pohl_Symdyn2d}. Its construction starts with constructing a cross section (in the sense of Poincar\'e) and choosing a convenient set of representatives with the help of a Ford fundamental domain. We start this section by recalling the steps essential for its present purpose. For proofs and a more detailed exposition we refer to \cite{Hilgert_Pohl, Pohl_Symdyn2d}. Contrary to other applications of cross sections, here we only have to require that all periodic geodesics on $Y_p$ intersect the cross section. After this brief presentation of the construction, we provide the associated transfer operator families and give a definition of period functions.

For all practical purposes we use the upper half plane
\[
 \h = \{ z\in\C \mid \Ima z> 0\}
\]
as model for the hyperbolic plane (even though the construction does not depend on the choice of the model). Further we identify its geodesic boundary with $P^1(\R) \cong \R \cup \{\infty\}$. The action of $\Gamma_p$ on $\h$ is then by M\"obius transformations, hence
\[
 \bmat{a}{b}{c}{d}.z = \frac{az+b}{cz+d}
\]
for all $\textbmat{a}{b}{c}{d}\in\Gamma_p$ and $z\in\h$. This action extends continuously to $P^1(\R)$. We call a point in $P^1(\R)$ cuspidal if it is stabilized by some element in $\Gamma_p$. Finally, a smooth function refers to a $C^\infty$ function.

\subsection{Ford fundamental domain} 
Underlying to all constructions is the Ford fundamental domain 
\[
 \mc F \sceq \left( (0,1) + i\R^+\right) \cap \bigcap_{k=1}^{p-1} \left\{ z\in\h \left\vert\ \left| z-\frac{k}{p}\right| > \frac{1}{p}\right.\right\}.
\]
It is a fundamental domain for $\Gamma_p$ with side pairing given as follows. The left vertical side is mapped to the right vertical side by the element
\begin{equation}\label{T}
 T \sceq \bmat{1}{1}{0}{1}.
\end{equation}
For each $k\in \{1,\ldots, p-1\}$ let 
\[
 h_k \sceq \bmat{k'}{-\frac{kk'+1}{p}}{p}{-k}
\]
be the unique element in $\Gamma_p$ with $k'\in \{1,\ldots, p-1\}$. Then the element $h_k$ maps the boundary arc of $\mc F$ contained in 
\[
 J_k \sceq \left\{ z\in\h \left\vert\ \left|z - \frac{k}{p}\right|=\frac{1}{p} \right.\right\}
\]
bijectively to that contained in $J_{k'}$. 

The function $k\mapsto k'$ depends on the value of the prime level $p$. However, we have
\[
 h_k^{-1} = h_{k'}
\]
for any $k$, and 
\[
 h_1 = \bmat{p-1}{-1}{p}{-1} = h_{p-1}^{-1}
\]
for any $p$. 

We use the side pairing to understand the local structure of the neighboring translates to $\mc F$ and to derive a presentation for $\Gamma_p$. For $k\in\{0,\ldots, p-1\}$ we let $A_k$ denote the strip in $\mc F$ between $\frac{k}{p}$ and $\frac{k+1}{p}$. Further, we set $A_p \sceq T^{-1}.A_{p-1}$ (and discard $A_{p-1}$). The relevant part of the effect of the side pairing elements on $A_0,\ldots, A_p$ is indicated in Figures~\ref{glue1pp} and \ref{glue4pp}. 

\begin{figure}[h]
\begin{center}
\includegraphics{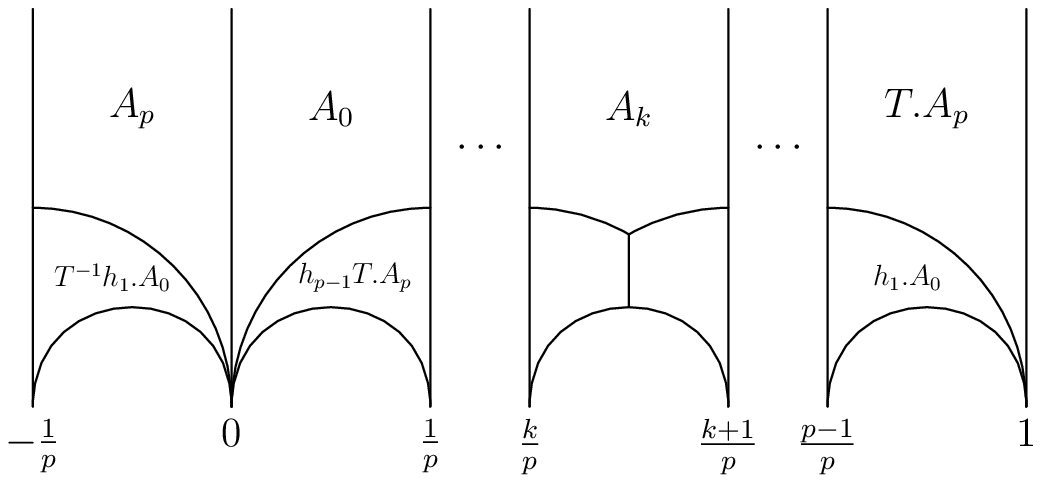}
\end{center}
\caption{}
\label{glue1pp}
\end{figure}

\begin{figure}[h]
\begin{center}
\includegraphics{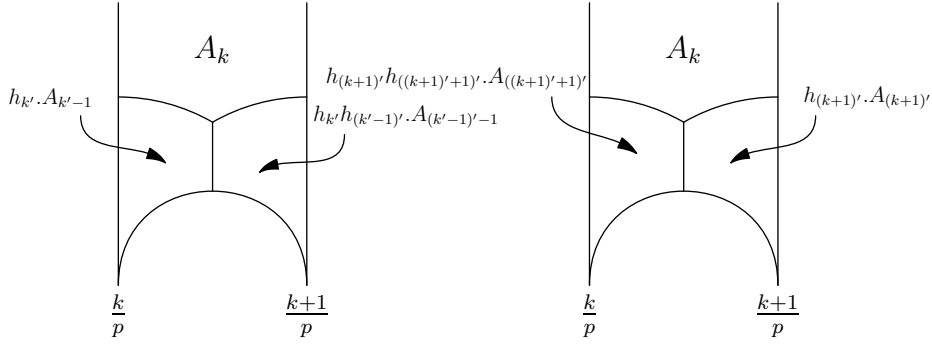}
\end{center}
\caption{Neighboring translates for $k\in\{1,\ldots, p-2\}$}
\label{glue4pp}
\end{figure}

From these we read off the presentation
\begin{equation}\label{presentation}
 \Gamma_p = \left\langle T, h_1, \ldots, h_{p-1} \left\vert\ 
\begin{split}
&h_{j'}h_j = \id\ \text{for $j=1,\ldots, p-1$,}\\
&h_{(k'-1)-1}h_{k'-1}h_k=\id\ \text{for $k=1,\ldots,p-2$}
\end{split}
\right.\right\rangle
\end{equation}
as well as the identities
\begin{equation}\label{identities}
 (k'-1)'-1 = (k+1)' \quad\text{and}\quad h_{k'}h_{(k'-1)'} = h_{(k+1)'}.
\end{equation}

\subsection{Cross section}\label{sec_cs} 
Let 
\[
 \pi\colon S\h \to SY_p = \Gamma_p\backslash S\h
\]
denote the canonical quotient map from the unit tangent bundle $S\h$ of $\h$ to that of $Y_p$. In the following we define a cross section for the geodesic flow on $Y_p$ via a set of representatives for it in $S\h$. For $k\in \{0,\ldots, p-1\}$ let
\[
C'_k \sceq \left\{ X\in S\h \left\vert\ X = a \frac{\partial}{\partial x}\vert_{\frac{k}{p}+iy} + b\frac{\partial}{\partial y}\vert_{\frac{k}{p}+iy},\ a>0,\ b\in\R,\ y>0\right.\right\},
\]
and
\[
C'_p \sceq \left\{ X\in S\h \left\vert\ X= a\frac{\partial}{\partial x}\vert_{iy} + b\frac{\partial}{\partial y}\vert_{iy},\ a<0,\ b\in\R,\ y>0\right.\right\}.
\]
The sets $C'_k$ ($k=0,\ldots, p-1$) consist of the unit tangent vectors based on the geodesic arc $\frac{k}{p}+i\R^+$ which point to the right. The set $C'_p$ consists of the unit tangent vectors based on $i\R^+$ which point to the left (cf.\@ Figure~\ref{crossreprpp}). 

\begin{figure}[h]
\begin{center}
\includegraphics{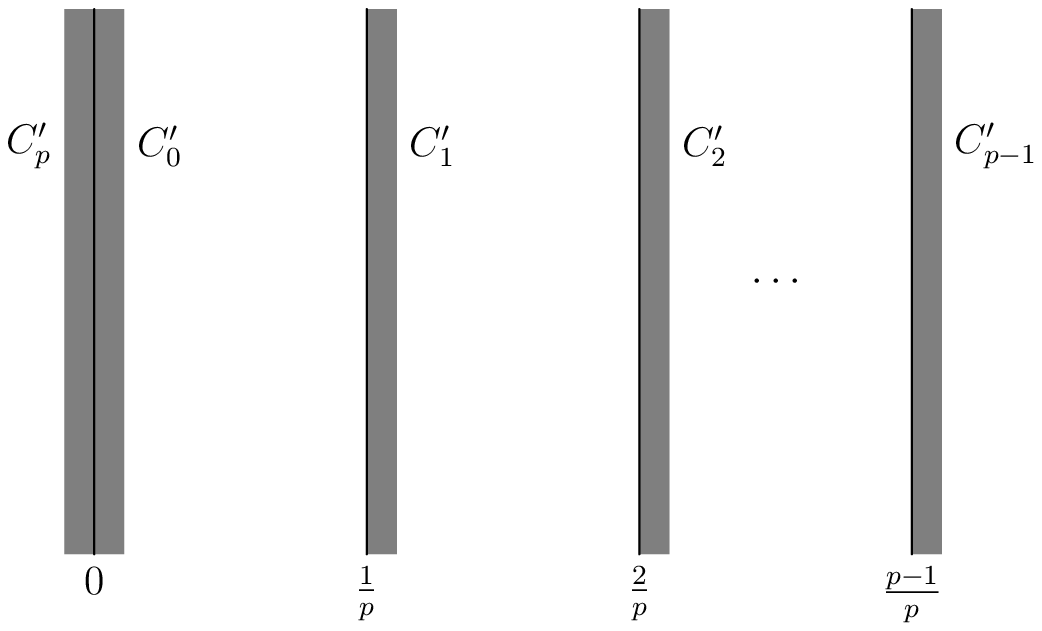}
\end{center}
\caption{}
\label{crossreprpp}
\end{figure}

Let
\[
 C' \sceq \bigcup_{k=0}^p C'_k
\]
and
\[
 \wh C \sceq \pi(C').
\]
In \cite{Hilgert_Pohl, Pohl_Symdyn2d} it is shown that $\wh C$ is a cross section for the geodesic flow on $Y_p$, and $C'$ is a set of representatives for it.

\textbf{Discrete dynamical system.}
Suppose that $\wh v$ is a vector in $\wh C$ and let $\wh \gamma_v$ denote the (unit speed) geodesic on $Y_p$ determined by 
\[
 \frac{d}{dt}\vert_{t=0}\wh\gamma_v(t) = \wh v.
\]
At least in the case that $\wh\gamma_v$ does not go into a cusp there is a minimal time $t(\wh v)>0$ such that 
\[
 \frac{d}{dt}\vert_{t=t(\wh v)}\wh\gamma_v(t) 
\]
is an element of $\wh C$. This gives rise to the partially defined first return map
\[
 \mc R\colon \wh C \to \wh C,\quad \wh v \mapsto  \frac{d}{dt}\vert_{t=t(\wh v)}\wh\gamma_v(t).
\]
The first return map $\mc R$ is conjugate to a discrete dynamical system on parts of the geodesic boundary $\R\cup\{\infty\}$ of $\h$ as seen in the following. For each vector $\wh v$ in $\wh C$ we have a unique representative $v$ in $C'$. If $v$ is contained in the component $C'_k$ for some $k=k(v) \in \{0,\ldots, p\}$, then we identify $\wh v$ with the triple $(\gamma_v(-\infty), \gamma_v(+\infty), k)$ where $\gamma_v$ is the geodesic on $\h$ determined by  
\[
 \frac{d}{dt}\vert_{t=0}\gamma_v(t) = v.
\]
Since $\gamma_v$ is a representative of $\wh\gamma_v$, the vector 
\[
 \mc R(\wh v) = \frac{d}{dt}\vert_{t=t(\wh v)}\wh\gamma_v(t)
\]
has a lift to $S\h$ which is tangent to $\gamma_v$. More precisely, it is represented by the first intersection of the set of tangent vectors to $\gamma_v(\R^+)$ with $\Gamma_p.C'$. Suppose that 
\[
 \frac{d}{dt}\vert_{t=t(\wh v)}\gamma_v(t) \in h.C'_\ell
\]
for some $h\in\Gamma_p$ and $\ell\in\{0,\ldots,p\}$. Then $\mc R(\wh v)$ is identified with 
\[
(h^{-1}.\gamma_v(-\infty), h^{-1}.\gamma_v(+\infty), \ell).
\]
Even though the geodesic $\gamma_v$ may intersect more than one of the components $C'_j$ ($j=0,\ldots,p$) of $C'$, it intersects each one in at most one vector. Therefore, the map 
\[
 \wh v \mapsto (\gamma_v(-\infty),\gamma_v(+\infty),k(v))
\]
is injective. In turn, the first return map $R$ is conjugate to the discrete dynamical system $\wt F$ defined on 
\[
 \left\{ (x,y,k) \left\vert\ 
\begin{split}
&x,y\in\R\cup\{\infty\},\ k\in\{0,\ldots,p\}, 
\\
&\exists\, \wh v\in\wh C\colon (\gamma_v(-\infty),\gamma_v(+\infty),k(v)) = (x,y,k)
\end{split}
\right.\right\}
\]
which maps $(\gamma_v(-\infty),\gamma_v(+\infty),k(v))$ to $(h^{-1}.\gamma_v(-\infty),h^{-1}.\gamma_v(+\infty),\ell)$ (in the notation from above). 

In the following, we are interested only in the forward part (the last two components) of $\wt F$. Let $F$ denote this restricted discrete dynamical system and let $D$ denote its domain of definition. To provide an explicit expression for $(D,F)$ we can deduce the relevant $\Gamma_p$-translates of $C'$ from the side pairing of the fundamental domain $\mc F$, see Figures~\ref{cross1pp} and \ref{cross2pp}.

\begin{figure}[h]
\begin{center}
\includegraphics{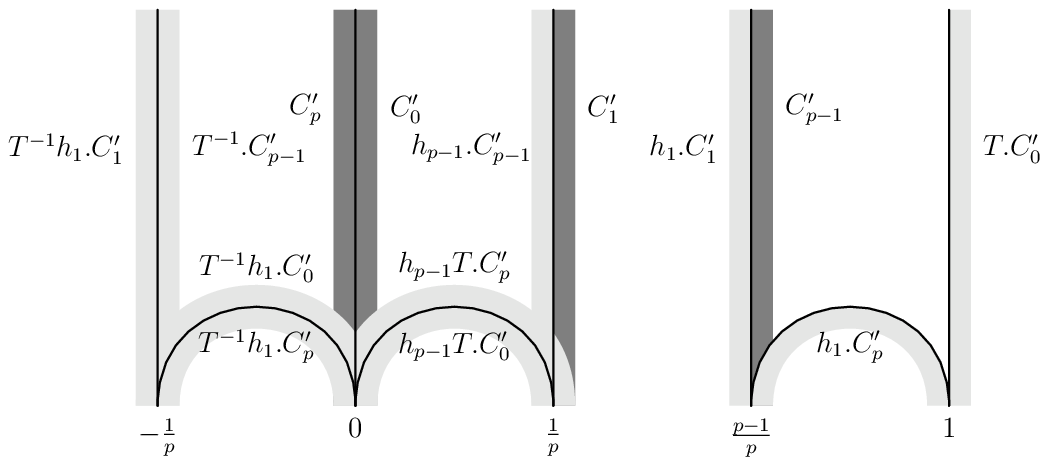}
\end{center}
\caption{}
\label{cross1pp}
\end{figure}

\begin{figure}[h]
\begin{center}
\includegraphics{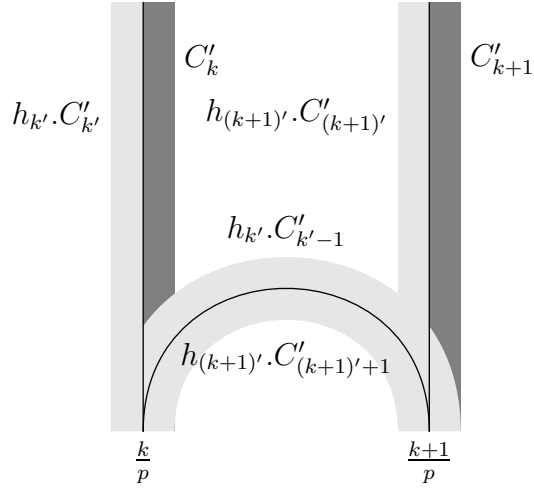}
\end{center}
\caption{Relevant $\Gamma_p$-translates for $k\in\{1,\ldots, p-2\}$}
\label{cross2pp}
\end{figure}

To simplify notation we set
\begin{align*}
 I_k &\sceq \left( \frac{k}{p},\infty\right) \qquad \text{for $k=0,\ldots, p-1$}
\intertext{and}
I_p & \sceq (-\infty,0).
\end{align*}
Then 
\[
  D = \bigcup_{k=0}^{p}  I_k \cup \{k\}.
\]
The action of $F$ on $D$ is given by the local diffeomorphisms
\begin{align*}
\left(-\infty, -\frac1p\right)\times \{p\} &\to I_1\times\{1\},\quad &(x,p) &\mapsto (h_{p-1}T.x,1)
\\
\left(-\frac1p,0\right)\times\{p\} &\to I_p\times\{p\},\quad &(x,p)&\mapsto (h_{p-1}T.x,p)
\\
\left(0,\frac1p\right)\times\{0\} &\to I_0\times\{0\},\quad &(x,0)&\mapsto (T^{-1}h_1.x,0)
\\
\left(\frac1p,\infty\right)\times\{0\} &\to I_1\times\{1\},\quad &(x,0)&\mapsto (x,1)
\\
\left(\frac{p-1}{p},1\right)\times \{p-1\} &\to I_p\times\{p\},\quad &(x,p-1)&\mapsto (h_{p-1}.x,p)
\\
(1,\infty)\times \{p-1\}&\to I_0\times\{0\},\quad &(x,p-1) &\mapsto (T^{-1}.x,0),
\end{align*}
and for $k\in\{1,\ldots, p-2\}$,
\begin{align*}
\left(\frac{k}{p},\frac{k+1}{p}\right)\times\{k\} &\to I_{(k+1)'+1}\times \{ (k+1)'+1\}, \quad &(x,k) &\mapsto (h_{k+1}.x, (k+1)'+1)
\\
\left(\frac{k+1}{p},\infty\right)\times \{k\} &\to I_{k+1}\times\{k+1\}, \quad &(x,k) &\mapsto (x,k+1).
\end{align*}

\subsection{Associated family of transfer operators}
For each $s\in\C$ the transfer operator $\mc L_{F,s}$ with parameter $s$ associated to $(D,F)$ is the operator 
\[
 \big(\mc L_{F,s}f\big)(x) \sceq \sum_{y\in F^{-1}(x)} \frac{f(y)}{|F'(y)|^s}
\]
defined on the space $\Fct(D;\C)$ of complex-valued functions on $D$. The structure of $F$ allows us to provide a matrix representation for $\mc L_{F,s}$. To that end we represent a function $f\in \Fct(D;\C)$ as
\[
 f =
\begin{pmatrix}
f\cdot 1_{I_0\times\{0\}}
\\
f\cdot 1_{I_1\times\{1\}}
\\
\vdots
\\
f\cdot 1_{I_{p-1}\times\{p-1\}}
\\
f\cdot 1_{I_p\times\{p\}}
\end{pmatrix}
\seqc
\begin{pmatrix}
 f_0
\\
f_1
\\
\vdots
\\
f_{p-1}
\\
f_p
\end{pmatrix}.
\]
Because of this vector structure, we may identify $I_k\times\{k\}$ with $I_k$ for any $k\in\{0,\ldots, p\}$.

Further, for $s\in\C$ and any function $\varphi\colon V\to\R$ on some subset $V$ of $\R$ we define the action
\begin{equation}\label{action}
 \big(\tau_s(g^{-1})\varphi\big)(t) \sceq \big(g'(t)\big)^s \varphi(g.t)
\end{equation}
of $g\in\Gamma_p$ on $\varphi$ whenever it is well-defined. Thus, if $g=\textbmat{a}{b}{c}{d}$, then 
\[
 \big(\tau_s(g^{-1})\varphi\big)(t) = \big( (ct+d)^{-2} \big)^s \varphi\left(\frac{at+b}{ct+d}\right).
\]
This is understood as a limit if $c\not=0$ and $t=-\frac{d}{c}$. 

The action of the transfer operator $\mc L_{F,s}$ on $f\in \Fct(D;\C)$ now becomes
\[
 \wt f = \mc L_{F,s} f
\]
with
\begin{align}
\label{eigen1} \wt f_0 & = \tau_s(T^{-1}h_1) f_0 + \tau_s(T^{-1})f_{p-1} &&\text{on $I_0$,}
\\
\label{eigen2} \wt f_p & = \tau_s(h_{p-1}) f_{p-1} + \tau_s(h_{p-1}T) f_p && \text{on $I_p$,}
\\
\label{eigen3} \wt f_1 & = f_0 + \tau_s(h_{p-1}T)f_p && \text{on $I_1$,}
\\
\label{eigen4} \wt f_{k+1} & = f_k + \tau_s(h_{k'}) f_{k'-1} && \text{on $I_{k+1}$}
\end{align}
for $k\in\{1,\ldots, p-2\}$.

We remark that the expression for $\mc L_{F,s}$ can directly be read off from Figures~\ref{cross1pp} and \ref{cross2pp}.

\subsection{Period functions}\label{sec_pf}
For $s\in \C$ let $\FE_s^{\omega,\dec}(\Gamma_p)$ be the space of function vectors 
\[
 f = 
\begin{pmatrix}
f_0 
\\
\vdots
\\
f_p
\end{pmatrix}
\]
such that 
\begin{enumerate}
\item[(PF1)] $f_j \in C^\omega(I_j;\C)$ for $j\in\{0,\ldots, p\}$,
\item[(PF2)] $f=\mc L_{F,s}f$,
\item[(PF3)] for $k\in\{1,\ldots,p-1\}$, the map
\[
\begin{cases}
f_k & \text{on $\left(\frac{k}{p},\infty\right)$}
\\
-\tau_s(h_{k'})f_{k'} & \text{on $\left(-\infty,\frac{k}{p}\right)$}
\end{cases}
\]
extends smoothly to $\R$, and
\item[(PF4)] the map
\[
\begin{cases}
-f_0 & \text{on $(0,\infty)$}
\\
f_p & \text{on $(-\infty,0)$}
\end{cases}
\]
extends smoothly to $P^1(\R)$.
\end{enumerate}

The notion of smooth extension to $P^1(\R)$ depends here on the parameter $s\in\C$. A function $\varphi\colon \R\to\C$ is said to extend smoothly to $P^1(\R)$ if and only if for some (and indeed any) element $g\in\Gamma_p$ which does not stabilize $\infty$, the functions $\varphi$ and $\tau_s(g)\varphi$ are smooth on $\R$.

\begin{remark}\label{smooth}
For $k\in\{1,\ldots,p-1\}$ let 
\[
 \varphi_k \sceq 
\begin{cases}
f_k & \text{on $\left(\frac{k}{p},\infty\right)$}
\\
-\tau_s(h_{k'})f_{k'} & \text{on $\left(-\infty,\frac{k}{p}\right)$}
\end{cases}
\]
be the map in (PF3). Then $\tau_s(h_k)\varphi_k = -\varphi_{k'}$. Thus, $\varphi_k$ extends smoothly to $P^1(\R)$.
\end{remark}

In Section~\ref{proofmainthm} below we will prove the following relation between Maass cusp forms and $1$-eigenfunctions of the transfer operator $\mc L_{F,s}$.

\begin{thm}\label{mainthm_coarse}
For $s\in\C$ with $0<\Rea s<1$, the space $\FE_s^{\omega,\dec}(\Gamma_p)$ is in linear isomorphism with the space of Maass cusp forms for $\Gamma_p$ with eigenvalue $s(1-s)$.
\end{thm}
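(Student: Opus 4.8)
The plan is to factor the asserted isomorphism through parabolic group cohomology, exactly as announced in the introduction. First I would invoke the characterization of \cite{BLZ_part2}: for $s\in\C$ with $0<\Rea s<1$, the space of Maass cusp forms for $\Gamma_p$ with eigenvalue $s(1-s)$ is linearly isomorphic to the parabolic cohomology $H^1_{\parab}(\Gamma_p;\mc V_s^{\omega,*})$, where $\mc V_s^{\omega,*}$ is the principal series module of functions on $P^1(\R)$ that are real-analytic off a finite set of cuspidal singularities, equipped with the action $\tau_s$, and where ``parabolic'' means that the restriction of a representing cocycle to each cyclic parabolic subgroup is a coboundary. This reduces the theorem to exhibiting a linear isomorphism
\[
\FE_s^{\omega,\dec}(\Gamma_p)\;\xrightarrow{\ \sim\ }\;H^1_{\parab}(\Gamma_p;\mc V_s^{\omega,*}),
\]
and the core of the argument is to show that each cohomology class has exactly one representative of ``period-function type''.

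Next I would build two mutually inverse maps. From a period function $f=(f_0,\dots,f_p)$ I would define a cocycle $c_f$ by prescribing its values on the generators $T,h_1,\dots,h_{p-1}$ of the presentation~\eqref{presentation} as explicit $\tau_s$-combinations of the components $f_j$ and the auxiliary functions $\varphi_k$ of Remark~\ref{smooth}. The point is that the eigenfunction equation (PF2), written out in the four families~\eqref{eigen1}--\eqref{eigen4}, is precisely engineered so that $c_f$ respects the two families of relations in~\eqref{presentation}; here the identities~\eqref{identities} are what make the combinatorics of the indices $k\mapsto k'$ close up. I would then check that (PF1) places each $c_f(g)$ in $\mc V_s^{\omega,*}$, that (PF3) together with the transformation rule $\tau_s(h_k)\varphi_k=-\varphi_{k'}$ from Remark~\ref{smooth} pins down the singularity behaviour at the cuspidal points, and that (PF4) forces $c_f$ to be parabolic on the $T$-subgroup.

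Conversely, given a parabolic cocycle $c$ I would recover a candidate period function by restricting appropriate cocycle values to the intervals $I_k$, reading the correct transporting group element for each $I_k$ off Figures~\ref{cross1pp} and~\ref{cross2pp}; the cocycle relation then reproduces~\eqref{eigen1}--\eqref{eigen4}, giving (PF2), while analyticity of $c$ off its singularities gives (PF1) and parabolicity gives (PF3)--(PF4). The final step is to verify that these two constructions are mutually inverse and that the period-function representative is \emph{unique} in its class: concretely, that a coboundary can never be a nonzero period function, because its enforced singularity and decay structure is incompatible with (PF1), (PF3), (PF4). This yields injectivity, while the reconstruction yields surjectivity.

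I expect the main obstacle to be the analytic matching of the third step rather than the algebra of the second: showing that the smooth-extension conditions (PF3) and (PF4) are \emph{equivalent} to (not merely implied by) parabolicity of the cohomology class, and that exactly one representative survives in each class. This is where the hypothesis $0<\Rea s<1$ is indispensable, both for the validity of the \cite{BLZ_part2} isomorphism and for the control of the cocycle values near the parabolic fixed points, and it is the place where the real-analyticity demanded in (PF1) must be reconciled with the merely smooth extension demanded across the cuspidal points.
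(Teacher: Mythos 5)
Your skeleton is the same as the paper's: invoke the characterization of Maass cusp forms by parabolic cohomology (Theorem~\ref{parab_char}, from \cite{BLZ_part2}) and construct explicit mutually inverse maps between $\FE_s^{\omega,\dec}(\Gamma_p)$ and $H^1_\parab(\Gamma_p;\mc V_s^{\omega*,\infty})$ using the presentation \eqref{presentation} and the identities \eqref{identities}; this is exactly Theorem~\ref{mainthm_fine}, proved via Propositions~\ref{cohom_period} and \ref{period_cohom}. However, your execution has a genuine gap at the step you yourself flag as the main obstacle. You propose to get injectivity/uniqueness by arguing that a coboundary has a ``singularity and decay structure'' incompatible with (PF1), (PF3), (PF4). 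That is not the obstruction and the argument would fail: a coboundary $g\mapsto \tau_s(g^{-1})\phi-\phi$ with $\phi\in\mc V_s^{\omega*,\infty}$ has no offending singularities whatsoever, yet is generically nonzero. The paper's actual mechanism is normalization at the cusp: $c(f)$ is defined with $c_T=0$, and by \cite[Proposition~4.5]{BLZ_part2} the module contains no nonzero $\tau_s(T^{-1})$-invariant vector for $0<\Rea s<1$, so each parabolic class has \emph{exactly one} representative vanishing on $T$. Consequently $[c(f)]=0$ forces $c(f)=0$, whence $f_k=0$ for $k=1,\dots,p-1$ by restriction, and $f_0=f_p=0$ via the uniqueness of the element $\psi$ solving $c_{h_{p-1}T}=\tau_s(T^{-1}h_1)\psi-\psi$ (Lemma~\ref{finestructure}, again resting on Proposition~4.5). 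Without this invariant-vector input, neither injectivity nor the mutual-inverse property of your two constructions has a proof.

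There is a second gap in the direction cohomology $\to$ period functions: you claim (PF1) follows from ``analyticity of $c$ off its singularities.'' Membership of $c_{h_k}$ in $\mc V_s^{\omega*,\infty}$ only gives real-analyticity off \emph{some} finite exceptional set, which a priori could meet $I_k$; and knowing singularities sit at cuspidal points is useless here, since the cuspidal points of $\Gamma_p$ are dense in $\R$. The paper resolves this with the explicit integral realization of the BLZ isomorphism for the $c_T=0$ representative,
\begin{equation*}
c_g(t)=\int_{g^{-1}.\infty}^{\infty}[u,R(t,\cdot)^s],
\end{equation*}
and Lemma~\ref{realana}, which locates the singularity of $c_{h_k}$ at the single point $h_k^{-1}.\infty=\tfrac{k}{p}$, so that $f_k=c_{h_k}\vert_{I_k}$ is real-analytic on all of $I_k$; the same lemma produces $f_0,f_p$ through $\psi(t)=-\int_0^\infty[u,R(t,\cdot)^s]$. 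Two smaller corrections: the coefficient module must be the \emph{smooth} semi-analytic vectors $\mc V_s^{\omega*,\infty}$ (your version, real-analytic off a finite set of cuspidal points with no global smoothness requirement, is a different module, and the smoothness across exceptional points and at $\infty$ is precisely what (PF3) and (PF4) encode); and in the converse direction (PF3) is produced by the relations $h_{j'}h_j=\id$ in the presentation, not by parabolicity --- only (PF4) reflects the parabolic condition, via the element $h_{p-1}T$.
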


Motivated by Theorem~\ref{mainthm_coarse}, we call the elements of $\FE_s^{\omega,\dec}(\Gamma_p)$ \textit{period functions} for the Maass cusp forms for $\Gamma_p$ with eigenvalue $s(1-s)$. 


\section{Period functions and Maass cusp forms}\label{proofmainthm}

For the proof of Theorem~\ref{mainthm_coarse} we use the characterization of Maass cusp forms in parabolic $1$-cohomology, which was recently developed in \cite{BLZ_part2}. In Section~\ref{sec_charac} below we briefly recall this characterization.
Theorem~\ref{mainthm_coarse} follows then from showing that parabolic $1$-cocycle classes are in linear isomorphism with period functions, which is done in Section~\ref{proofthmfine} below. The isomorphism between period functions and parabolic $1$-cocycle classes is constructive as well as the isomorphism between parabolic $1$-cohomology and Maass cusp forms from \cite{BLZ_part2}.

We denote the space of Maass cusp forms for $\Gamma_p$ with eigenvalue $s(1-s)$ by $\MCF_s(\Gamma_p)$.

\subsection{Maass cusp forms as parabolic $1$-cocycle classes}\label{sec_charac}
Let $s\in\C$. Recall that the space $\mc V_s^{\omega*,\infty}$ of semi-analytic smooth vectors in the line model of the principal series representation with spectral parameter $s$ is the space of complex-valued functions $\varphi$ on $\R$ which are 
\begin{enumerate}[(a)]
\item\label{pc1} smooth  and extend smoothly to $P^1(\R)$,
\item and real-analytic on $\R\setminus E$ for a finite subset E which depends on $\varphi$.
\end{enumerate}
The action of $\Gamma_p$ on $\mc V_s^{\omega*,\infty}$ is given by the action $\tau_s$ in \eqref{action}. The notion of smooth extension to $P^1(\R)$ depends on the parameter $s$ in the same way as for period functions.

In the notation of restricted cocycles, the space of $1$-cocycles of group cohomology of $\Gamma_p$ is
\[
 Z^1(\Gamma_p;\mc V_s^{\omega*,\infty}) = \{ c\colon \Gamma_p \to \mc V_s^{\omega*,\infty} \mid \forall\, g,h\in\Gamma_p\colon c_{gh} = \tau_s(h^{-1})c_g + c_h \}.
\]
Here we write $c_g \in V_s^{\omega*,\infty}$ for the image $c(g)$ of $g\in\Gamma_p$ under the map $c\colon \Gamma_p \to \mc V_s^{\omega*,\infty}$. The space of parabolic $1$-cocycles is 
\[
 Z^1_\parab(\Gamma_p;\mc V_s^{\omega*,\infty}) = \left\{c\in Z^1(\Gamma_p;\mc V_s^{\omega*,\infty}) \left\vert\ 
\begin{split}
& \forall\, g\in\Gamma_p\ \text{parabolic}\ \exists\, \psi\in\mc V_s^{\omega*,\infty} \colon 
\\
& c_g = \tau_s(g^{-1})\psi -\psi 
\end{split}
\right.\right\}
\]
The spaces of the $1$-coboundaries of group cohomology and of parabolic cohomology are identical. They are given by
\[
 B^1_\parab(\Gamma_p;\mc V_s^{\omega*,\infty}) = B^1(\Gamma_p;\mc V_s^{\omega*,\infty}) = \{ g\mapsto \tau_s(g^{-1})\psi-\psi \mid \psi\in \mc V_s^{\omega*,\infty}\}.
\]
The parabolic $1$-cohomology space is the quotient space
\[
 H^1_\parab(\Gamma_p;\mc V_s^{\omega*,\infty}) = Z^1_\parab(\Gamma_p;\mc V_s^{\omega*,\infty})/B^1_\parab(\Gamma_p;\mc V_s^{\omega*,\infty}).
\]

\begin{thm}\cite{BLZ_part2}\label{parab_char}
For $s\in\C$ with $0<\Rea s<1$, the spaces $\MCF_s(\Gamma_p)$ and $H^1_\parab(\Gamma_p;\mc V_s^{\omega*,\infty})$ are isomorphic as vector spaces.
\end{thm}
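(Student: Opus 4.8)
The statement is the Bruggeman--Lewis--Zagier comparison between Maass cusp forms and parabolic cohomology, so I would prove it in the spirit of an Eichler--Shimura isomorphism for the hyperbolic Laplacian. The plan is to first build an explicit linear map $\MCF_s(\Gamma_p)\to H^1_\parab(\Gamma_p;\mc V_s^{\omega*,\infty})$ by integrating a closed, $\Gamma_p$-equivariant $1$-form, and then to prove it is bijective, with surjectivity as the genuinely difficult point.

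The two building blocks I would use are the Poisson-type kernel
\[
 R_s(t;z) \sceq \left(\frac{\Ima z}{|z-t|^2}\right)^s \qquad (z\in\h,\ t\in\R),
\]
which for each fixed $t$ is a $\Delta$-eigenfunction in $z$ with eigenvalue $s(1-s)$ and transforms under $\Gamma_p$ in the $t$-variable by the contragredient of $\tau_s$, and the Green's $1$-form $\eta(u,v)$ on $\h$, which is closed precisely when $u$ and $v$ are $\Delta$-eigenfunctions with equal eigenvalue. For $u\in\MCF_s(\Gamma_p)$ and a fixed base point $z_0\in\h$ I would set
\[
 c_g(t) \sceq \int_{z_0}^{g^{-1}.z_0} \eta\big(u,\,R_s(t;\cdot)\big).
\]
Closedness of $\eta(u,R_s(t;\cdot))$ makes the integral path-independent, and combining additivity over concatenated paths with $\Gamma_p$-invariance of $u$ and equivariance of $R_s$ yields the cocycle relation $c_{gh}=\tau_s(h^{-1})c_g+c_h$. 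Since $R_s(t;z)$ is real-analytic in $t\in\R$ and extends to $P^1(\R)$ for every fixed $z\in\h$, and the integration path is compact, each $c_g$ lies in $\mc V_s^{\omega*,\infty}$ (in fact in the analytic vectors $\mc V_s^\omega$). Replacing $z_0$ by $z_1$ changes $c$ by the coboundary $g\mapsto\tau_s(g^{-1})\psi-\psi$ with $\psi(t)=\int_{z_0}^{z_1}\eta\big(u,R_s(t;\cdot)\big)$, so the class $[c]$ is well defined. Parabolicity is exactly where cuspidality enters: for parabolic $g$ fixing a cusp $\xi$, the rapid decay of $u$ toward $\xi$ makes $\int_{z_0}^{\xi}\eta\big(u,R_s(t;\cdot)\big)$ converge, and this integral supplies the $\psi$ with $c_g=\tau_s(g^{-1})\psi-\psi$ required by $Z^1_\parab$.

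Injectivity I would deduce from a nondegeneracy argument. If $[c]=0$, then after adjusting the base point all periods of $\eta(u,R_s(t;\cdot))$ vanish; letting the endpoints approach the boundary while varying $t$ recovers the boundary hyperfunction of $u$, and since the kernels $R_s(t;\cdot)$ separate $\Delta$-eigenfunctions this forces $u\equiv 0$.

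The main obstacle is surjectivity: reconstructing a Maass cusp form from an arbitrary parabolic $1$-cocycle class, and showing that precisely the \emph{cuspidal} forms are obtained. Here I would invoke the cohomological machinery of \cite{BLZ_part2}, comparing the cohomology of $\Gamma_p$ with coefficients in a chain of $\Gamma_p$-modules --- the analytic vectors $\mc V_s^\omega$, the semi-analytic smooth module $\mc V_s^{\omega*,\infty}$, and larger distribution/hyperfunction modules --- linked by short exact sequences. Using the explicit presentation \eqref{presentation} of $\Gamma_p$ one computes these groups and shows that the relevant comparison and connecting maps are isomorphisms, so that every parabolic class has a representative of kernel type; the boundary hyperfunction attached to such a representative then integrates against $R_s$ to a genuine $\Delta$-eigenfunction on $\h$, and the parabolic condition forces exactly the cuspidal decay. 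Controlling this passage between the coefficient modules, uniformly in $s$ across the strip $0<\Rea s<1$, is the technical heart of the argument.
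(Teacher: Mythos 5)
The paper offers no proof of this theorem: it is quoted from \cite{BLZ_part2}, and only the explicit isomorphism is recalled, namely the cocycle $c_g(t)=\int_{g^{-1}.z_0}^{z_0}[u,R(t,\cdot)^s]$ of \eqref{def_cocycle}. Your construction is exactly this transform (your $R_s(t;z)$ equals $R(t,z)^s$, and your reversed path orientation only changes $c$ by a sign), and your treatment of the hard converse direction --- deferring injectivity and surjectivity to the cohomological machinery of \cite{BLZ_part2} --- is precisely what the paper itself does by citing that work, so the proposal matches the paper's treatment.
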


The isomorphism in Theorem~\ref{parab_char} is given by the following integral transform: Let $R\colon \R\times\h \to\h$,
\[
 R(t,z) \sceq \Ima\left( \frac{1}{t-z} \right)
\]
denote the Poisson kernel, and let 
\[
 [u,v] \sceq \frac{\partial u}{\partial z}\cdot vdz + u\cdot\frac{\partial v}{\partial\overline z} d\overline z
\]
denote the Green form for two complex-valued smooth functions $u,v$ on $\h$. Let $u$ be a Maass cusp form for $\Gamma_p$ with eigenvalue $s(1-s)$ and suppose that $[c]$ is the parabolic $1$-cocycle class in $H^1_\parab(\Gamma_p;\mc V_s^{\omega*,\infty})$ which is associated to $u$. We pick any point $z_0\in\h$. Then $[c]$ is represented by the cocycle $c$ with 
\begin{equation}\label{def_cocycle}
 c_g(t) \sceq \int_{g^{-1}.z_0}^{z_0} [u,R(t,\cdot)^s]
\end{equation}
for $g\in\Gamma_p$. The integration is performed along any differentiable path in $\h$ from $g^{-1}.z_0$ to $z_0$, e.g.\@ the connecting geodesic arc. Since $[u,R(t,\cdot)^s]$ is a closed $1$-form (\cite{Lewis_Zagier}), the integral is well-defined. Varying the choice of $z_0$ changes $c$ by a parabolic $1$-coboundary. Recall the parabolic element $T=\textbmat{1}{1}{0}{1}$ from \eqref{T}. Then \cite[Proposition~4.5]{BLZ_part2} implies (cf.\@ \cite[Proposition~14.2]{BLZ_part2}) that each parabolic $1$-cocycle class $[c]$ has a unique representative $c$ with $c_T = 0$.

Since Maass cusp forms decay rapidly towards any cusp and $\infty$ is a cuspidal point, we may choose $z_0 = \infty$ in \eqref{def_cocycle} (cf.\@ the discussion in \cite{Lewis, Moeller_Pohl}). Then the path of integration has to be essentially contained in $\h$ (see Lemma~\ref{realana} below for are more precise statement). We remark that the choice $z_0=\infty$ yields the unique representative $c$ of the cocycle class $[c]$ for which $c_T = 0$.

The following lemma is now implied by standard theorems on parameter integrals.

\begin{lemma}\label{realana}
Let $u \in \MCF_s(\Gamma_p)$. If $a,b$ are two distinct cuspidal points with $a<b$, then the parameter integral
\[
 t\mapsto \int_a^b [u, R(t,\cdot)^s]
\]
defines a function which is smooth on $\R$ and real-analytic on  $(a,b) \cup (\R\setminus [a,b])$. Here the integration is performed along the geodesic from $a$ to $b$, or any differentiable path $\gamma\colon (c,d) \to \h$ with $\lim_{r\to c}\gamma(r) = a$, $\lim_{r\to d}\gamma(r) = b$ (in the cone topology of the geodesic compactification of $\h$), or any path which is piecewise of this form. In particular, for any $g\in\Gamma_p$, the function
\[
 c_g(t) = \int_{g^{-1}.\infty}^\infty [u, R(t,\cdot)^s]
\]
is smooth on $\R$ and real-analytic on the intervals $(-\infty, g^{-1}.\infty)$ and $(g^{-1}.\infty,\infty)$, and
\[
 t\mapsto \int_0^\infty [u,R(t,\cdot)^s]
\]
is smooth on $\R$ and real-analytic on $(-\infty,0)\cup (0,\infty)$.
\end{lemma}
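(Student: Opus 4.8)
The plan is to treat the integral
\[
 I(t) \sceq \int_a^b [u, R(t,\cdot)^s]
\]
as a parameter integral and invoke the standard theorems on differentiation under the integral sign and on analyticity of parameter integrals. The integrand $[u,R(t,\cdot)^s]$ is a closed $1$-form on $\h$ for each fixed $t\in\R$ (this is the result of \cite{Lewis_Zagier} already cited above), so the value of $I(t)$ does not depend on the chosen path from $a$ to $b$; this justifies replacing the connecting geodesic by any of the admissible paths listed in the statement and, conversely, lets me choose the most convenient path for the analysis at a given value of~$t$. The key analytic input is that the Poisson kernel $R(t,z) = \Ima\bigl(\tfrac{1}{t-z}\bigr)$, and hence $R(t,\cdot)^s$ together with its $z$- and $\overline z$-derivatives, depends smoothly on $z\in\h$ and real-analytically on $t$, with the only singularity of $t\mapsto R(t,z)$ occurring when $t$ equals the real part of a boundary limit of the path --- i.e.\@ when $t\in\{a,b\}$ for the geodesic, or more generally when $t$ lies on the real trace of the path.

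First I would establish smoothness of $I$ on all of $\R$. Fix $t_0\in\R$. For $t$ in a neighborhood of $t_0$, the integrand $[u,R(t,\cdot)^s]$ and all its $t$-derivatives are continuous in $(t,z)$ and, crucially, are dominated by an integrable bound along the path: near the endpoints $a,b$ one uses the rapid decay of the Maass cusp form $u$ towards the cusps together with the controlled growth of the Poisson kernel and its $t$-derivatives as $z$ approaches the boundary, so that the integral and all its derivatives converge locally uniformly in $t$. The standard theorem on differentiation under the integral sign then yields $I\in C^\infty(\R)$, and the derivatives $I^{(n)}(t)$ are obtained by differentiating the integrand in~$t$. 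This is exactly where the hypothesis $0<\Rea s<1$ and the cuspidal decay of $u$ enter: they guarantee the convergence of the boundary contributions, which is the only point where naive integration could fail.

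Next I would prove real-analyticity on $(a,b)$ and on $\R\smallsetminus[a,b]$. For $t$ in either of these open sets, $t$ avoids the real trace of a suitably chosen path $\gamma$ from $a$ to $b$ (for $t\in(a,b)$ one pushes the path slightly into $\h$ so that it separates $t$ from the boundary; for $t$ outside $[a,b]$ the geodesic itself suffices). Then $t\mapsto R(t,\gamma(r))^s$ extends to a holomorphic function of a complex parameter in a neighborhood of the real point, uniformly in $r$, and one checks the hypotheses of the Weierstrass-type theorem (or equivalently Morera's theorem applied to $I$) that a locally uniform limit / integral of holomorphic functions is holomorphic; hence $I$ is real-analytic there. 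The reason analyticity must break at $t=a$ and $t=b$ is that there the parameter $t$ collides with the real boundary limit of every admissible path, so the integrand's holomorphic extension in $t$ degenerates; this matches the claimed domain of analyticity.

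The two ``in particular'' assertions then follow by specialization. Taking $a = g^{-1}.\infty$ (a cuspidal, hence admissible, point) and letting $b\to\infty$ along the vertical geodesic, the cuspidal decay of $u$ at $\infty$ makes the integral converge, and the general statement with $\{a,b\}=\{g^{-1}.\infty,\infty\}$ gives smoothness on $\R$ with real-analyticity on $(-\infty,g^{-1}.\infty)\cup(g^{-1}.\infty,\infty)$; the point $\infty$ contributes no finite singularity. The case $a=0$, $b=\infty$ is identical. The main obstacle I anticipate is not the formal appeal to the parameter-integral theorems but verifying the integrable domination uniformly in $t$ near the cuspidal endpoints --- i.e.\@ making precise the interplay between the rapid decay of $u$ near each cusp and the growth of $R(t,\cdot)^s$ and its $t$-derivatives as the path approaches the boundary, so that both the smoothness and the analyticity estimates hold; once that decay estimate is in hand, everything else is a routine invocation of standard theorems.
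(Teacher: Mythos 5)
Your proposal is correct and takes the same route as the paper: the paper offers no detailed argument at all, merely asserting that the lemma ``is now implied by standard theorems on parameter integrals.'' Your write-up supplies exactly the details that assertion presupposes --- path-independence from the closedness of $[u,R(t,\cdot)^s]$, integrable domination near the cuspidal endpoints via the rapid decay of $u$ to justify differentiation under the integral sign, and holomorphic extension in $t$ (away from the real trace of the path) plus a Morera/Weierstrass argument for real-analyticity on $(a,b)\cup(\R\setminus[a,b])$.
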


Finally, as in \cite[Chap.\@ II.2]{Lewis_Zagier} it follows that 
\begin{equation}\label{transform}
 \tau_s(g^{-1}) \int_a^b [u,R(t,\cdot)^s] = \int_{g^{-1}.a}^{g^{-1}.b}[u,R(t,\cdot)^s]
\end{equation}
for any $g\in\Gamma_p$ and any cuspidal points $a,b$.

To end this section, we use Lemma~\ref{realana} and \eqref{transform} to improve the understanding of the fine structure of parabolic $1$-cocycles.

\begin{lemma}\label{finestructure}
Let $\Rea s\in (0,1)$ and let $c\in Z^1_\parab(\Gamma_p;\mc V_s^{\omega*,\infty})$ such that $c_T = 0$. Suppose that $u$ is the Maass cusp form associated to the cocycle class $[c]$. Then 
\begin{equation}\label{unique}
 c_{h_{p-1}T} = \tau_s(T^{-1}h_1)\psi - \psi
\end{equation}
with $\psi\in\mc V_s^{\omega*,\infty}$ determined by
\[
 \psi(t) = -\int_0^\infty [u,R(t,\cdot)^s],\quad t\in\R.
\]
Moreover, $\psi$ is the unique element in $\mc V_s^{\omega*,\infty}$ such that \eqref{unique} is satisfied.
\end{lemma}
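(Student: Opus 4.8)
The plan is to exploit the distinguished representative of $[c]$ furnished by the hypothesis $c_T=0$, namely the integral cocycle with base point $z_0=\infty$,
\[
c_g(t)=\int_{g^{-1}.\infty}^{\infty}[u,R(t,\cdot)^s],\qquad g\in\Gamma_p,
\]
which is legitimate since, as recalled above, the choice $z_0=\infty$ yields precisely the representative with $c_T=0$. First I would record the relevant matrices. Using $h_{p-1}=h_1^{-1}$ one computes $h_{p-1}T=\textbmat{-1}{0}{-p}{-1}$ and $T^{-1}h_1=\textbmat{-1}{0}{p}{-1}$, so that $T^{-1}h_1=(h_{p-1}T)^{-1}$, with $(h_{p-1}T)^{-1}.\infty=-\tfrac1p$ and $(h_{p-1}T)^{-1}.0=0$.

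With these in hand the identity \eqref{unique} is a direct computation. On the one hand $c_{h_{p-1}T}(t)=\int_{-1/p}^{\infty}[u,R(t,\cdot)^s]$. On the other hand, applying the transformation rule \eqref{transform} to $\psi=-\int_0^\infty[u,R(t,\cdot)^s]$ with $g=h_{p-1}T$ gives $\tau_s(T^{-1}h_1)\psi=-\int_{(h_{p-1}T)^{-1}.0}^{(h_{p-1}T)^{-1}.\infty}[u,R(t,\cdot)^s]=\int_{-1/p}^{0}[u,R(t,\cdot)^s]$, whence $\tau_s(T^{-1}h_1)\psi-\psi=\int_{-1/p}^{0}+\int_{0}^{\infty}=\int_{-1/p}^{\infty}[u,R(t,\cdot)^s]$, the concatenation at the cuspidal point $0$ being justified by Lemma~\ref{realana}. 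This matches $c_{h_{p-1}T}$.

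Next I would verify $\psi\in\mc V_s^{\omega*,\infty}$. Smoothness of $\psi$ on $\R$ and real-analyticity on $\R\setminus\{0\}$ are exactly the last assertion of Lemma~\ref{realana}. For the smooth extension to $P^1(\R)$ it suffices to exhibit one $g_0\in\Gamma_p$ not fixing $\infty$ for which $\tau_s(g_0)\psi$ is smooth on $\R$; taking $g_0=\textbmat{1}{0}{p}{1}$ and using \eqref{transform} gives $\tau_s(g_0)\psi=-\int_{0}^{1/p}[u,R(t,\cdot)^s]$, which is smooth on $\R$ by Lemma~\ref{realana} because both endpoints $0,\tfrac1p$ are finite cuspidal points. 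Hence $\psi$ satisfies (a) and (b) and lies in $\mc V_s^{\omega*,\infty}$.

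The main point is uniqueness. If $\psi'\in\mc V_s^{\omega*,\infty}$ also satisfies \eqref{unique}, then $\phi\sceq\psi-\psi'$ obeys $\tau_s(T^{-1}h_1)\phi=\phi$, i.e.\ $\phi$ is fixed by the parabolic $T^{-1}h_1$, which fixes the point $0$. Writing $g=h_{p-1}T$, iterating the invariance yields $\phi(t)=\big((g^n)'(t)\big)^s\phi(g^n.t)$ for all $n$, where $g^n.t=\tfrac{t}{npt+1}\to 0$ and $\big((g^n)'(t)\big)^s=(npt+1)^{-2s}$ has modulus $|npt+1|^{-2\Rea s}\to 0$ since $\Rea s>0$ (the base $(g^n)'(t)=(npt+1)^{-2}$ is positive, so no branch ambiguity arises). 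Letting $n\to\infty$ with $t\neq 0$ fixed, continuity of $\phi$ at $0$ bounds $\phi(g^n.t)$, so the right-hand side tends to $0$ and $\phi(t)=0$; continuity then forces $\phi(0)=0$ as well, so $\phi\equiv 0$ and $\psi$ is unique. The only genuinely non-formal step is this vanishing of parabolic-fixed vectors, and I expect it to be the crux; everything else reduces to the explicit matrices and to Lemma~\ref{realana} together with the transformation rule \eqref{transform}.
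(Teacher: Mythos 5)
Your verification of \eqref{unique} is essentially the paper's own ``straightforward calculation'' carried out in full: the paper likewise starts from the normalized integral cocycle $c_g(t)=\int_{g^{-1}.\infty}^{\infty}[u,R(t,\cdot)^s]$ (legitimate because $z_0=\infty$ gives the unique representative with $c_T=0$), and uses \eqref{transform} together with Lemma~\ref{realana} to see that $\psi(t)=-\int_0^\infty[u,R(t,\cdot)^s]$ lies in $\mc V_s^{\omega*,\infty}$ and satisfies the equation; your explicit matrices and the splitting $\int_{-1/p}^{\infty}=\int_{-1/p}^{0}+\int_{0}^{\infty}$ are exactly what that calculation amounts to. The genuine difference is the uniqueness step: the paper disposes of it in one line by citing \cite[Proposition~4.5]{BLZ_part2}, whereas you prove it from scratch. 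Your argument --- setting $\phi=\psi-\psi'$, noting $\tau_s(T^{-1}h_1)\phi=\phi$ with $T^{-1}h_1=(h_{p-1}T)^{-1}$ parabolic fixing $0$, iterating to get $\phi(t)=(npt+1)^{-2s}\,\phi\bigl(\tfrac{t}{npt+1}\bigr)$, and letting $n\to\infty$ so that the prefactor tends to $0$ (this is where $\Rea s>0$ enters) while $\phi(g^n.t)$ stays bounded by continuity of $\phi$ at $0$ --- is correct: for fixed $t\neq 0$ the exceptional value $npt+1=0$ occurs for at most one $n$, and the positivity of the base $(npt+1)^{-2}$ removes any branch ambiguity, as you observe. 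What each approach buys: the citation keeps the paper short and situates the lemma inside the general cohomological framework of \cite{BLZ_part2}, while your parabolic-iteration argument makes the proof self-contained and isolates the minimal analytic input for uniqueness, namely continuity of the difference at the parabolic fixed point together with $\Rea s>0$; in effect you have reproved the relevant special case of \cite[Proposition~4.5]{BLZ_part2}.
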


\begin{proof}
The element 
\[
 h_{p-1}T = \bmat{1}{0}{p}{1}
\]
is parabolic. By definition there exists an element $\psi\in\mc V_s^{\omega*,\infty}$ such that 
\[
c_{h_{p-1}T} = \tau_s(T^{-1}h_1)\psi - \psi.
\] 
From \cite[Proposition~4.5]{BLZ_part2} it follows that $\psi$ is unique. By Lemma~\ref{realana} and \eqref{transform}, the map $\psi\colon \R \to \C$,
\[
 \psi(t) \sceq -\int_0^\infty [u,R(t,\cdot)^s]
\]
is an element of $\mc V_s^{\omega*,\infty}$. The normalization of $c$ yields that 
\[
 c_{h_{p-1}T}(t) = \int_{T^{-1}h_1.\infty}^\infty [u,R(t,\cdot)^s].
\]
Now a straightforward calculation shows that this function satisfies \eqref{unique}.
\end{proof}

\subsection{Period functions and parabolic cohomology}\label{proofthmfine}

Let $[c]\in H^1_{\parab}(\Gamma_p; \mc V_s^{\omega^*,\infty})$ be a parabolic $1$-cocycle class, and let $c\in Z^1_\parab(\Gamma_p; \mc V_s^{\omega^*,\infty})$ be its unique representative such that $c_T = 0$. We associate to $[c]$ a function vector 
\[
 f([c]) = \begin{pmatrix} f_0 \\ \vdots \\ f_p \end{pmatrix}
\]
as follows. For $k\in\{1,\ldots,p-1\}$ we define
\[
 f_k \sceq c_{h_k}\vert_{I_k}.
\]
Let $\psi$ be the unique (see Lemma~\ref{finestructure}) element in $\mc V_s^{\omega*,\infty}$ such that 
\[
 c_{h_{p-1}T} = \tau_s(T^{-1}h_1) \psi - \psi.
\]
We define
\[
 f_0 \sceq -\psi\vert_{I_0} \quad\text{and}\quad f_p\sceq \psi\vert_{I_p}.
\]
Conversely, given $f\in \FE_s^{\omega,\dec}(\Gamma_p)$ we define a map 
\[
 c = c(f) \colon \Gamma_p \to \mc V_s^{\omega^*,\infty}
\]
by $c_T \sceq 0$ and 
\begin{equation}\label{defi_chk}
 c_{h_k} \sceq 
\begin{cases}
 f_k & \text{on $\left(\frac{k}{p},\infty\right)$}
\\
-\tau_s(h_{k'})f_{k'} & \text{on $\left(-\infty,\frac{k}{p}\right)$}
\end{cases}
\end{equation}
for $k\in\{1,\ldots, p-1\}$.

We remark that \eqref{defi_chk} actually defines a smooth, semi-analytic function on $P^1(\R)$ by (PF3) and Remark~\ref{smooth}.

\begin{thm}\label{mainthm_fine}
For $\Rea s \in (0,1)$, the map
\[
 \FE_s^{\omega,\dec}(\Gamma_p) \to H^1_\parab(\Gamma_p;\mc V_s^{\omega^*,\infty}),\quad f \mapsto [c(f)]
\]
is a linear isomorphism between the vector spaces $\FE_s^{\omega,\dec}(\Gamma_p)$ and $H^1_\parab(\Gamma_p;\mc V_s^{\omega^*,\infty})$. Its inverse map is given by
\[
 H^1_\parab(\Gamma_p;\mc V_s^{\omega^*,\infty}) \to \FE_s^{\omega,\dec}(\Gamma_p),\quad [c] \mapsto f([c]).
\]
\end{thm}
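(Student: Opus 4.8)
The plan is to establish that the two explicitly given maps $f\mapsto[c(f)]$ and $[c]\mapsto f([c])$ are mutually inverse, well-defined, linear maps between the two vector spaces. Linearity is immediate from the fact that both constructions involve only restrictions, the linear action $\tau_s$, and the linear assignment $u\mapsto c$ from Theorem~\ref{parab_char}; so the real work is well-definedness in both directions and the inverse relation. I would organize the proof around three claims: first, that $c(f)$ is a genuine parabolic $1$-cocycle with values in $\mc V_s^{\omega*,\infty}$; second, that $f([c])$ indeed lands in $\FE_s^{\omega,\dec}(\Gamma_p)$; and third, that composing the two maps in either order yields the identity.

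\textbf{Well-definedness of $f\mapsto c(f)$.} Starting from $f\in\FE_s^{\omega,\dec}(\Gamma_p)$, I would first check that each prescribed value $c_{h_k}$ lies in $\mc V_s^{\omega*,\infty}$: by the remark following \eqref{defi_chk}, condition (PF3) together with Remark~\ref{smooth} guarantees that the piecewise formula extends to a smooth, semi-analytic function on $P^1(\R)$, which is exactly membership in $\mc V_s^{\omega*,\infty}$. The crux is then to verify that the assignment $T\mapsto 0$, $h_k\mapsto c_{h_k}$ extends to a well-defined $1$-cocycle on all of $\Gamma_p$. Because $\Gamma_p$ has the presentation \eqref{presentation} on generators $T,h_1,\ldots,h_{p-1}$, a cocycle is determined by its values on generators, and extending is possible precisely when the cocycle relation $c_{gh}=\tau_s(h^{-1})c_g+c_h$ is consistent with every defining relator. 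So I would substitute the candidate values into each relation in \eqref{presentation}: the relations $h_{j'}h_j=\id$ (using $h_k^{-1}=h_{k'}$) and the triangle relations $h_{(k'-1)-1}h_{k'-1}h_k=\id$. Here the transfer-operator equations \eqref{eigen1}--\eqref{eigen4}, which encode exactly $f=\mc L_{F,s}f$ via (PF2), should translate term-by-term into these cocycle relations once the definitions of $f_0,f_p$ in terms of $\psi$ and the identities \eqref{identities} are used; this is the step I expect to be the main obstacle, since it requires matching each functional equation to the correct relator and keeping careful track of the index shifts $k\mapsto k'$ and the $\tau_s$-twists on the correct side. Finally, parabolicity of the cocycle must be checked; since $\Gamma_p$ is generated up to conjugacy by the parabolic classes of $T$ and $h_{p-1}T$, and $c_T=0$ is trivially a coboundary difference while $c_{h_{p-1}T}$ is arranged to be of the required coboundary form $\tau_s(T^{-1}h_1)\psi-\psi$ by the definition of $f_0,f_p$, parabolicity at all parabolic elements follows from the cocycle relation.

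\textbf{Well-definedness of $[c]\mapsto f([c])$.} For the reverse map I take the unique representative $c$ with $c_T=0$ (which exists by the cited \cite[Proposition~4.5]{BLZ_part2}) and set $f_k=c_{h_k}|_{I_k}$ and $f_0,f_p$ from the $\psi$ of Lemma~\ref{finestructure}. Property (PF1) (real-analyticity on $I_j$) follows from Lemma~\ref{realana}, which pins down exactly where $c_g$ is real-analytic once $c_g$ is written as the period integral $\int_{g^{-1}.\infty}^\infty[u,R(t,\cdot)^s]$; here $h_k^{-1}.\infty=h_{k'}.\infty$ lands at the correct endpoint so that $I_k$ is an interval of analyticity. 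Property (PF3) is essentially the definition of $c_{h_k}$ via \eqref{defi_chk} read backwards, using the relation $\tau_s(h_k)\varphi_k=-\varphi_{k'}$ from Remark~\ref{smooth}, and (PF4) follows from the definition of $f_0,f_p$ through $\psi\in\mc V_s^{\omega*,\infty}$, whose smooth extension to $P^1(\R)$ is exactly the content of $\psi$ being a semi-analytic smooth vector. The remaining property (PF2), i.e.\ that $f$ solves the transfer-operator equations, is again the same computation as above but in the opposite direction, now deducing \eqref{eigen1}--\eqref{eigen4} from the cocycle relations for the relators in \eqref{presentation}.

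\textbf{Mutual inverseness and conclusion.} Because properties (PF2) and the cocycle relations are literally the same system of identities viewed from the two sides, the two constructions are manifestly inverse on the level of the data $(f_k)$ and $(c_{h_k})$: restricting $c(f)$ to $I_k$ returns $f_k$, and conversely $c(f([c]))$ agrees with $c$ on generators (hence everywhere, by the cocycle property and the presentation) and sends $T\mapsto0$, so it reproduces the normalized representative; passing to cohomology classes then gives $[c(f([c]))]=[c]$. The only subtlety is that $f([c])$ is built from the normalized representative, so I must invoke \cite[Proposition~4.5]{BLZ_part2} once more to see that every class $[c]$ has a unique such representative, making $[c]\mapsto f([c])$ well-defined on cohomology and not merely on cocycles. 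Combining all of this with the evident linearity of both maps finishes the proof that they are mutually inverse linear isomorphisms, as claimed.
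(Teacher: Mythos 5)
Your proposal is correct and follows essentially the same architecture as the paper, which splits Theorem~\ref{mainthm_fine} into Propositions~\ref{cohom_period} and \ref{period_cohom}: well-definedness of $[c]\mapsto f([c])$ via Lemma~\ref{realana} and Lemma~\ref{finestructure}, and well-definedness of $f\mapsto c(f)$ by checking the relators of the presentation \eqref{presentation} together with parabolicity at $T$ and $h_{p-1}T$. The one place you genuinely deviate is in deducing (PF2) for $f([c])$: the paper manipulates the integrals $c_g=\int_{g^{-1}.\infty}^\infty[u,R(t,\cdot)^s]$ using \eqref{transform} and \eqref{identities}, whereas you propose to read \eqref{eigen1}--\eqref{eigen4} directly off the cocycle relations (plus the normalization $c_T=0$ and the $\psi$ of Lemma~\ref{finestructure}). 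This does work, and is arguably more symmetric with the converse direction, but it silently requires the interval inclusions implicit in the transfer operator --- e.g.\ $h_k.I_{k+1}\subset I_{k'-1}$ and $T.I_0\subset I_{p-1}$ --- so that restrictions of $c_{h_{k'-1}}$, $c_{h_{p-1}}$, $\psi$ can be identified with the corresponding $f_j$; the paper's integral identities hold on all of $\R$ and let it bypass that bookkeeping (note also that (PF1) still forces you to invoke the integral representation via Lemma~\ref{realana}, since membership in $\mc V_s^{\omega*,\infty}$ alone does not locate the non-analytic points). The relator computation you flag as the main obstacle does go through: the paper verifies it branch by branch, using the functional equation \eqref{eigen4} at three shifted indices. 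Finally, you make explicit the mutual inverseness and linearity, which the paper leaves implicit; your appeal to the uniqueness statements (\cite[Proposition~4.5]{BLZ_part2} and Lemma~\ref{finestructure}) is exactly what is needed there, since $f([c(f)])_0=f_0$ and $f([c(f)])_p=f_p$ hold only because the $\psi$ built from (PF4) is \emph{the} unique solution of \eqref{unique}.
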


By Theorem~\ref{parab_char}, Theorem~\ref{mainthm_fine} immediately yields Theorem~\ref{mainthm_coarse}. The proof of Theorem~\ref{mainthm_fine} is split into Propositions~\ref{cohom_period} and \ref{period_cohom} below.

\begin{prop}\label{cohom_period}
Let $s\in\C$ with $1>\Rea s>0$. If $[c]\in H^1_\parab(\Gamma_p;\mc V_s^{\omega^*,\infty})$, then $f([c])\in \FE_s^{\omega,\dec}(\Gamma_p)$.
\end{prop}

\begin{proof} 
Let $f\sceq f([c])$. We start by showing that $f$ is a $1$-eigenfunction of the transfer operator $\mc L_{F,s}$. Let $c\in Z^1_\parab(\Gamma_p; \mc V_s^{\omega^*,\infty})$ be the representative of $[c]$ with $c_T = 0$. By definition, for all $g\in\Gamma_p$ we have
\[
 c_g(t) = \int_{g^{-1}.\infty}^\infty [u,R(t,\cdot)^s]
\]
for a (unique) Maass cusp form $u$ with eigenvalue $s(1-s)$. Let $k\in\{1,\ldots, p-2\}$. On the interval $I_{k+1}$ it follows that
\begin{align*}
f_k + \tau_s(h_{k'})f_{k'-1} & = c_{h_k} + \tau_s(h_{k'})c_{h_{k'-1}}
\\
& = \int_{h_{k'}.\infty}^\infty [u,R(t,\cdot)^s] + \tau_s(h_{k'}) \int_{h_{(k'-1)'}.\infty}^\infty [u,R(t,\cdot)^s]
\\
& = \int_{h_{k'}.\infty}^\infty [u,R(t,\cdot)^s] + \int_{h_{k'}h_{(k'-1)'}.\infty}^{h_{k'}.\infty} [u,R(t,\cdot)^s]
\\
& = \int_{h_{k'}h_{(k'-1)'}.\infty}^\infty [u, R(t,\cdot)^s].
\end{align*}
Now $h_{k'}h_{(k'-1)'} = h_{(k+1)'}$ (see \eqref{identities}) yields
\begin{align*}
f_k + \tau_s(h_{k'})f_{k'-1} & = \int_{h_{(k+1)'}.\infty}^\infty [u,R(t,\cdot)^s] = c_{h_{k+1}} = f_{k+1}
\end{align*}
on $I_{k+1}$. Recall that $c_{h_{p-1}T} = \tau_s(T^{-1}h_1)\psi-\psi$ with 
\[
 \psi(t) = -\int_0^\infty [u,R(t,\cdot)^s].
\]
Then the remaining identifies follow easily by straightforward manipulations. Thus, $f$ is indeed a $1$-eigenfunction of $\mc L_{F,s}$. Lemma~\ref{realana} immediately shows that $f_k$ is real-analytic on $I_k$ for each $k\in\{0,\ldots, p\}$.

Since $c_{h_k} = -\tau_s(h_{k'})c_{h_{k'}}$ for all $k\in\{1,\ldots, p-1\}$, the map 
\[
\begin{cases}
f_k & \text{on $\left( \frac{k}{p},\infty\right)$}
\\
-\tau_s(h_{k'})f_{k'} & \text{on $\left(-\infty,\frac{k}{p}\right)$}
\end{cases}
\]
is a restriction of $c_{h_k}$ and hence extends smoothly to $\R$. Finally, the map 
\[
\begin{cases}
 -f_0 & \text{on $(0,\infty)$}
\\
f_p & \text{on $(-\infty,0)$}
\end{cases}
\]
is a restriction of $\psi$. In turn it extends smoothly to $P^1(\R)$.
\end{proof}

\begin{prop}\label{period_cohom}
Let $s\in\C$ with $\Rea s \in (0,1)$ and $f\in\FE_s^{\omega,\dec}(\Gamma_p)$. Then $c(f) \in Z^1_\parab(\Gamma_p;\mc V_s^{\omega^*,\infty})$.
\end{prop}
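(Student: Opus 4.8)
The plan is to show that the map $c = c(f)\colon \Gamma_p \to \mc V_s^{\omega*,\infty}$ defined by $c_T = 0$ and \eqref{defi_chk} extends to a genuine parabolic $1$-cocycle, i.e.\ that it respects the cocycle condition on all of $\Gamma_p$ and that each value lies in $\mc V_s^{\omega*,\infty}$. Since $\Gamma_p$ is generated by $T, h_1, \ldots, h_{p-1}$ with the explicit presentation \eqref{presentation}, the strategy is to \emph{define} $c$ on the generators and verify that the proposed assignment is compatible with all defining relations; by the universal property of group presentations for cocycles, compatibility with the relations is exactly what is needed for $c$ to extend to a well-defined cocycle on the whole group. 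The remark following \eqref{defi_chk} already records that each $c_{h_k}$ is a smooth, semi-analytic function on $P^1(\R)$ (using (PF3) and Remark~\ref{smooth}), so membership in $\mc V_s^{\omega*,\infty}$ is in hand for the generators; the cocycle condition then transports this to all group elements.

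First I would set up the cocycle bookkeeping: recall that a $1$-cocycle is determined by its values on generators, subject to the constraint that the relations map to $0$ under the cocycle condition $c_{gh} = \tau_s(h^{-1})c_g + c_h$. Concretely, for a relation $w = \id$ one computes the induced value $c_w$ by telescoping along the word $w$ and checks it vanishes. So the core of the proof is to verify the two families of relations in \eqref{presentation}: the inversion relations $h_{j'}h_j = \id$ for $j = 1,\ldots,p-1$, and the triangle relations $h_{(k'-1)'-1}\,h_{k'-1}\,h_k = \id$ for $k = 1,\ldots,p-2$. For the first family the governing input is Remark~\ref{smooth}, which states $\tau_s(h_k)\varphi_k = -\varphi_{k'}$; since $c_{h_k} = \varphi_k$ by \eqref{defi_chk}, this is precisely the antisymmetry $c_{h_k} = -\tau_s(h_{k'})c_{h_{k'}}$ that makes $c_{h_{j'}h_j}$ collapse to $0$. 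For the triangle relations I would use the transfer-operator equation (PF2), read off in the component form \eqref{eigen1}--\eqref{eigen4}, together with the arithmetic identities \eqref{identities}, to match the telescoped sum against the functional equation and cancel. The key auxiliary fact is that $f$ being a $1$-eigenfunction encodes exactly the additive compatibilities among the $c_{h_k}$ that the cocycle relations demand.

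The parabolicity requirement is the next item. By definition of $Z^1_\parab$, I must exhibit, for every parabolic $g \in \Gamma_p$, an element $\psi \in \mc V_s^{\omega*,\infty}$ with $c_g = \tau_s(g^{-1})\psi - \psi$. It suffices to treat a set of representatives of conjugacy classes of primitive parabolic elements, since the coboundary condition is stable under conjugation and under taking powers (a direct computation shows that if $c_g = \tau_s(g^{-1})\psi - \psi$ then $c_{g^n}$ is a coboundary for the same $\psi$, and conjugating $g$ replaces $\psi$ by its $\tau_s$-transform, which again lies in $\mc V_s^{\omega*,\infty}$). The two relevant generators of parabolic classes are $T$, for which $c_T = 0 = \tau_s(T^{-1})0 - 0$ is trivially a coboundary, and $h_{p-1}T = \textbmat{1}{0}{p}{1}$. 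For the latter I would use the defining property of the candidate element $f_0 = -\psi\vert_{I_0}$, $f_p = \psi\vert_{I_p}$: by (PF4) the function $\psi$ given by $-f_0$ on $(0,\infty)$ and $f_p$ on $(-\infty,0)$ extends smoothly to $P^1(\R)$ and hence lies in $\mc V_s^{\omega*,\infty}$, and one checks $c_{h_{p-1}T} = \tau_s(T^{-1}h_1)\psi - \psi$ directly, which is exactly \eqref{unique} in Lemma~\ref{finestructure} read in reverse.

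I expect the main obstacle to be the bookkeeping in the triangle relations: the indices $k'$, $(k'-1)'$, $(k+1)'$ shift in a way that is delicate, and one must apply \eqref{identities} and the $\tau_s$-cocycle twist in precisely the right order so that the telescoping sum of three terms collapses to zero rather than to a stray nonzero boundary. A related subtlety is that the pieces defined on half-lines in \eqref{defi_chk} must be verified to glue into globally smooth, semi-analytic functions on $P^1(\R)$ before the cocycle identities can even be stated as equalities in $\mc V_s^{\omega*,\infty}$; this gluing is exactly what the remark after \eqref{defi_chk}, (PF3), (PF4), and Remark~\ref{smooth} supply, so I would invoke those carefully at the outset and then reduce everything to formal manipulations of the functional equation (PF2) and the arithmetic of the $k \mapsto k'$ involution.
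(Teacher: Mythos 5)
Your proposal is correct and follows essentially the same route as the paper: reduce well-definedness to the two families of relations in \eqref{presentation}, kill the inversion relations via the antisymmetry $c_{h_k}=-\tau_s(h_{k'})c_{h_{k'}}$ from (PF3)/Remark~\ref{smooth}, kill the triangle relations by telescoping and matching against the eigenfunction equations \eqref{eigen1}--\eqref{eigen4} together with \eqref{identities}, and establish parabolicity by building $\psi$ from (PF4) and verifying $c_{h_{p-1}T}=\tau_s(T^{-1}h_1)\psi-\psi$. Your explicit remark that it suffices to check parabolicity on representatives of conjugacy classes of primitive parabolic elements (here $T$ and $h_{p-1}T$, i.e.\ the two cusps) is a point the paper leaves implicit, and is a welcome clarification rather than a deviation.
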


\begin{proof}
Let $c\sceq c(f)$. We start by proving that $c$ is well-defined. The presentation \eqref{presentation} of $\Gamma_p$ implies that it suffices to prove
\[
 c_{h_{j'}h_j} = 0  \quad\text{and}\quad c_{h_{(k'-1)-1}h_{k'-1}h_k}  = 0
\]
for $j\in\{1,\ldots, p-1\}$ and $k\in\{1,\ldots, p-2\}$. By the remark preceding Theorem~\ref{mainthm_fine}, it is enough to establish these identities on a dense subset of $P^1(\R)$. For $j\in\{1,\ldots, p-1\}$ property (PF3) for $f$ immediately yields
\[
 c_{h_{j'}h_j} = \tau_s(h_{j'})c_{h_{j'}} + c_{h_j} = 0
\]
on $P^1(\R)$. Let $k\in\{1,\ldots, p-2\}$. Then 
\[
 c_{h_{(k'-1)-1}h_{k'-1}h_k} = \tau_s(h_{k}^{-1}h_{k'-1}^{-1}) c_{h_{(k'-1)'-1}} + \tau_s(h_{k'})c_{h_{k'-1}} + c_{h_k}.
\]
Since $h_k^{-1}h_{k'-1}^{-1} = h_{(k+1)'}$ and $(k'-1)'-1 = (k+1)'$ (see \eqref{identities}), it follows that
\begin{equation}\label{mustvan}
 c_{h_{(k'-1)-1}h_{k'-1}h_k} = \tau_s(h_{(k+1)'}) c_{h_{(k+1)'}} + \tau_s(h_{k'})c_{h_{k'-1}} + c_{h_k}.
\end{equation}
By definition we have
\[
c_{h_k} =
\begin{cases}
f_k & \text{on $\left(\frac{k}{p},\infty\right)$}
\\
-\tau_s(h_{k'})f_{k'} & \text{on $\left(-\infty,\frac{k}{p}\right)$.}
\end{cases}
\]
Moreover, 
\[
\tau_s(h_{k'}) c_{h_{k'-1}} =
\begin{cases}
\tau_s(h_{k'})f_{k'-1} & \text{on $\left(-\infty,\frac{k}{p}\right)\cup \left(\frac{k+1}{p},\infty\right)$}
\\
-\tau_s(h_{(k+1)'})f_{(k'-1)'} & \text{on $\left(\frac{k}{p},\frac{k+1}{p}\right)$} 
\end{cases}
\]
and
\[
\tau_s(h_{(k+1)'})c_{h_{(k+1)'}} = 
\begin{cases}
\tau_s(h_{(k+1)'})f_{(k+1)'} & \text{on $\left(-\infty,\frac{k+1}{p}\right)$}
\\
-f_{k+1} & \text{on $\left(\frac{k+1}{p},\infty\right)$.} 
\end{cases}
\]
Thus, \eqref{mustvan} becomes
\begin{equation}\label{fform}
c_{h_{(k'-1)-1}h_{k'-1}h_k} =
\begin{cases}
 -f_{k+1} + \tau_s(h_{k'})f_{k'-1} + f_k & \text{on $\left(\frac{k+1}{p},\infty\right)$}
\\
\tau_s(h_{(k+1)'})f_{(k+1)'} + f_k - \tau_s(h_{(k+1)'})f_{(k'-1)'} & \text{on $\left(\frac{k}{p},\frac{k+1}{p}\right)$}
\\
\tau_s(h_{(k+1)'})f_{(k+1)'} + \tau_s(h_{k'})f_{k'-1} - \tau_s(h_{k'})f_{k'} & \text{on $\left(-\infty,\frac{k}{p}\right)$.}
\end{cases}
\end{equation}
Since $f$ is a $1$-eigenfunction of $\mc L_{F,s}$ we have 
\begin{align}
\label{branch1} f_{k+1} & = f_k + \tau_s(h_{k'})f_{k'-1} && \text{on $\left(\frac{k+1}{p},\infty\right)$}
\\
\label{branch2} f_{(k'-1)'} & = f_{(k+1)'} + \tau_s(h_{k+1})f_k && \text{on $\left(\frac{(k'-1)'}{p},\infty\right)$}
\\
\label{branch3} f_{k'} & = f_{k'-1} + \tau_s(h_{(k'-1)'})f_{(k+1)'} && \text{on $\left(\frac{k'}{p},\infty\right)$.}
\end{align}
Here we used $(k'-1)'-1=(k+1)'$. Now \eqref{branch1} shows directly that the first branch of \eqref{fform} vanishes. Acting with $\tau_s(h_{(k+1)'})$ on \eqref{branch2} shows that the second branch of \eqref{fform} vanishes. Finally, acting with $\tau_s(h_{k'})$ on \eqref{branch3} shows that the third branch of \eqref{fform} vanishes.  

To complete the proof it remains to show that $c$ is parabolic. To that end we define
\[
 \psi \sceq 
\begin{cases}
-f_0 & \text{on $(0,\infty)$}
\\
f_p & \text{on $(-\infty,0)$.}
\end{cases}
\]
By (PF4) this defines an element of $\mc V_s^{\omega^*,\infty}$. We claim that
\[
 c_{h_{p-1}T} = \tau_s(T^{-1}h_1)\psi -\psi.
\]
Again it suffices to establish this identity on a dense subset of $P^1(\R)$. Since $c_T=0$, we have
\[
 c_{h_{p-1}T} = \tau_s(T^{-1})c_{h_{p-1}} + c_T = \tau_s(T^{-1})c_{h_{p-1}}
\]
with
\[
\tau_s(T^{-1})c_{h_{p-1}} = 
\begin{cases}
\tau_s(T^{-1})f_{p-1} & \text{on $\left(-\frac{1}{p},\infty\right)$}
\\
-\tau_s(T^{-1}h_1)f_1 & \text{on $\left(-\infty,-\frac{1}{p}\right)$.}
\end{cases}
\]
On $(0,\infty)$ we have
\begin{align*}
c_{h_{p-1}T} = \tau_s(T^{-1})c_{h_{p-1}} = f_0 -\tau_s(T^{-1}h_1)f_0 = \tau_s(T^{-1}h_1)\psi -\psi
\end{align*}
by \eqref{eigen1}. Here, \eqref{eigen2} is equivalent to 
\[
 \tau_s(T^{-1}h_1)f_p = \tau_s(T^{-1})f_{p-1} + f_p
\]
on $T^{-1}h_1.I_p = \left(-\frac{1}{p},0\right)$. Thus, on this interval we have
\[
c_{h_{p-1}T} = \tau_s(T^{-1}h_1)f_p - f_p = \tau_s(T^{-1}h_1)\psi - \psi.
\]
Finally, \eqref{eigen3} is equivalent to 
\[
 \tau_s(T^{-1}h_1)f_1 = \tau_s(T^{-1}h_1)f_0 + f_p
\]
on $T^{-1}h_1.I_1 = \left(-\infty,-\frac1p\right)$. This yields
\[
c_{h_{p-1}T} = -\tau_s(T^{-1}h_1)f_0 - f_p = \tau_s(T^{-1}h_1)\psi-\psi
\]
on $\left(-\infty,-\frac1p\right)$. Thus, the proof is complete.
\end{proof}


\section{Different choices of sets of representatives}\label{sec_examp}

The definition of period functions in Section~\ref{sec_pf} involved the choice of a set of representatives for the cross section. A different choice would lead to a different definition of period functions, for which an analog of Theorem~\ref{mainthm_fine} could be established. Thus, any two definitions of period functions arising in this way indeed define isomorphic sets of functions. In this section we present one example to illustrate the effect of a different choice of sets of representatives for the Hecke congruence subgroup $\Gamma_3$. Throughout we use 
\[
 h_1 = \bmat{2}{-1}{3}{-1} \quad\text{and}\quad h_2 = \bmat{1}{-1}{3}{-2}
\]
and $T=\textbmat{1}{1}{0}{1}$. We assume throughout that $\Rea s \in (0,1)$.

\textbf{The original choice.} With the set of representatives from Section~\ref{sec_cs} for the cross section we get the transfer operator
\[
 \mc L_s=
\begin{pmatrix}
\tau_s(T^{-1}h_1) & 0 & \tau_s(T^{-1}) & 0 
\\
1 & 0 & 0 & \tau_s(h_2T)
\\
0 & 1+\tau_s(h_2) & 0 & 0
\\
0 & 0 & \tau_s(h_2) & \tau_s(h_2T) 
\end{pmatrix}
\]
acting on function vectors
\[
 f =
\begin{pmatrix}
 f_0
\\
f_1
\\
f_2
\\
f_3
\end{pmatrix}
\]
where $f_0 \in \Fct((0,\infty);\C)$, $f_1\in\Fct((\frac13,\infty);\C)$, $f_2\in\Fct((\frac23,\infty);\C)$ and $f_3\in\Fct((-\infty,0);\C)$. The definition of period functions is then exactly the one from Section~\ref{sec_pf}.

\textbf{A different choice.} Now we choose the set of representatives 
\[
 \wt C' \sceq \bigcup_{k=0}^3 \wt C'_k
\]
as indicated in Figure~\ref{crossother}. 

\begin{figure}[h]
\begin{center}
\includegraphics{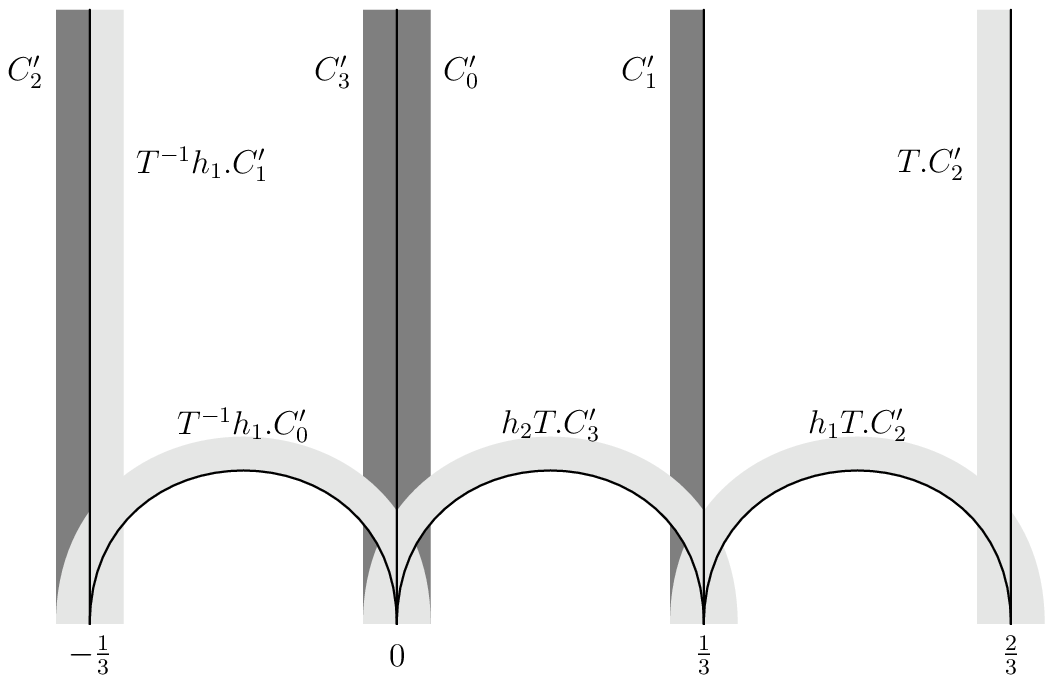}
\end{center}
\caption{}
\label{crossother}
\end{figure}

From this figure we can read off that the associated transfer operator is
\[
 \wt{\mc L}_s =
\begin{pmatrix}
\tau_s(T^{-1}h_1) & \tau_s(T^{-1}h_1) & 0 & 0 
\\
0 & 0 & \tau_s(T) + \tau_s(h_1T) & 0
\\
\tau_s(T^{-1}h_1) & 0 & 0 & 1
\\
0 & 1 & 0 & \tau_s(h_2T) 
\end{pmatrix}
\]
acting on function vectors
\[
 \wt f = 
\begin{pmatrix}
 \wt f_0
\\
\wt f_1
\\
\wt f_2
\\
\wt f_3
\end{pmatrix}
\]
where $\wt f_0\in \Fct( (0,\infty);\C)$, $\wt f_1\in\Fct((-\infty,\frac13);\C)$, $\wt f_2\in\Fct((-\infty,-\frac13);\C)$ and $\wt f_3\in\Fct((-\infty,0);\C)$. In this setting period functions are defined to be those function vectors 
\begin{itemize}
\item for which $\wt f_0,\ldots, \wt f_3$ are real-analytic on their respective domain of definition,
\item which are $1$-eigenfunctions of $\wt{\mc L}_s$,
\item for which the two maps
\[
\begin{cases}
\wt f_1 & \text{on $\left(-\infty,\frac13\right)$}
\\
-\tau_s(h_2T)\wt f_2 & \text{on $\left(\frac13,\infty\right)$}
\end{cases}
\]
and
\[
\begin{cases}
\wt f_2 & \text{on $\left(-\infty,-\frac13\right)$}
\\
-\tau_s(T^{-1}h_1)\wt f_1 & \text{on $\left(-\frac13,\infty\right)$}
\end{cases}
\]
extend smoothly to $\R$, and
\item for which the map
\[
\begin{cases}
-\wt f_0 & \text{on $(0,\infty)$}
\\
\wt f_1 & \text{on $(-\infty,0)$} 
\end{cases}
\]
extends smoothly to $P^1(\R)$.
\end{itemize}

\textbf{Relation between the two definitions of period functions.} Let $\FE_{s,1}^{\omega,\dec}(\Gamma_3)$ denote the space of period functions from the first definition, and $\FE_{s,2}^{\omega,\dec}(\Gamma_3)$ the space of period functions from the second definition. One easily proves the following proposition.

\begin{prop}
The spaces $\FE_{s,1}^{\omega,\dec}(\Gamma_3)$ and $\FE_{s,2}^{\omega,\dec}(\Gamma_3)$ are linear isomorphic. Such an isomorphism is provided by the map $\FE_{s,1}^{\omega,\dec}(\Gamma_3) \to \FE_{s,2}^{\omega,\dec}(\Gamma_3)$,
\[
 f = 
\begin{pmatrix}
f_0
\\
f_1
\\
f_2
\\
f_3
\end{pmatrix}
\mapsto 
\wt f =
\begin{pmatrix}
f_0
\\
\tau_s(h_2)f_2
\\
\tau_s(T^{-1}h_1)f_1
\\
f_3
\end{pmatrix}.
\]
\end{prop}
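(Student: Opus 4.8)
The plan is to verify directly that the proposed map $f\mapsto\wt f$ carries period functions of the first kind to period functions of the second kind, and that its evident inverse does the reverse. Since both spaces are cut out by the same three types of conditions (real-analyticity on the respective domains, being a $1$-eigenfunction of the relevant transfer operator, and the smooth-extension requirements), the proof reduces to checking each condition separately. The real-analyticity is immediate: each component of $\wt f$ is obtained from a component of $f$ by applying some $\tau_s(g)$ with $g\in\Gamma_3$, and $\tau_s(g)$ preserves real-analyticity on any interval not containing the relevant pole, so one only needs to confirm the domains match up (e.g.\@ $\tau_s(h_2)$ sends $f_2\in C^\omega\big((\tfrac23,\infty)\big)$ to a function real-analytic on $h_2^{-1}.(\tfrac23,\infty)=(-\infty,\tfrac13)$, which is exactly the domain required for $\wt f_1$).

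The core computation is the eigenfunction condition. First I would write out the four scalar equations encoded by $\mc L_s f=f$ (these are just \eqref{eigen1}--\eqref{eigen4} specialized to $p=3$) and the four encoded by $\wt{\mc L}_s\wt f=\wt f$. Then, substituting $\wt f_0=f_0$, $\wt f_1=\tau_s(h_2)f_2$, $\wt f_2=\tau_s(T^{-1}h_1)f_1$, $\wt f_3=f_3$ into the latter four equations, I would use the cocycle-type relation $\tau_s(g)\tau_s(h)=\tau_s(gh)$ together with the specific group identities among $T,h_1,h_2$ for $\Gamma_3$ (recorded in \eqref{identities}, e.g.\@ $h_2=h_1^{-1}$, $h_1T h_1=\cdots$) to rewrite each transformed equation back into one of the original four eigenvalue equations for $f$. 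The bookkeeping here is the main obstacle: one must track which interval each identity is being asserted on, since the two transfer operators are supported on different intervals and the change of variables moves domains around. I expect each of the four transformed equations to collapse to one of \eqref{eigen1}--\eqref{eigen4} after a single application of a group relation, so this is laborious but mechanical.

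Finally I would check the smooth-extension conditions. The two conditions in the second definition — the maps built from $\{\wt f_1,-\tau_s(h_2T)\wt f_2\}$ and from $\{\wt f_2,-\tau_s(T^{-1}h_1)\wt f_1\}$ — should transform, under the substitution, into exactly the two instances ($k=1,2$) of condition (PF3) for $f$, again using $\tau_s(g)\tau_s(h)=\tau_s(gh)$ and the relation $h_1^{-1}=h_2$; here Remark~\ref{smooth} guarantees that smoothness of the (PF3)-map at one cusp propagates to its $\Gamma_3$-translate, which is what lets the two formulations agree. The last condition, smooth extension to $P^1(\R)$ of the map $\{-\wt f_0,\wt f_1\}$, becomes the map $\{-f_0,\tau_s(h_2)f_2\}$; since $\wt f_1=\tau_s(h_2)f_2$ agrees near $0$ with $f_3$ up to the eigenfunction relations, this should reduce to (PF4) for $f$. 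Once all conditions are matched in both directions, linearity and bijectivity of the map are obvious (the map is given componentwise by fixed linear operators $\tau_s(g)$, and writing down the inverse via $\tau_s(g^{-1})$ completes the proof).
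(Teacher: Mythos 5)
The paper itself offers no argument for this proposition (it is dismissed with ``one easily proves''), so there is no route to compare against: your direct componentwise verification is certainly the intended one. Most of it is correct. The domain bookkeeping works out, with one slip: under the convention \eqref{action}, the domain of $\tau_s(h_2)f_2$ is $h_2.(\tfrac23,\infty)=(-\infty,\tfrac13)$, whereas the set you wrote, $h_2^{-1}.(\tfrac23,\infty)=h_1.(\tfrac23,\infty)$, equals $(\tfrac13,\tfrac23)$; your conclusion is right but your formula is not. The four equations of $\wt{\mc L}_s\wt f=\wt f$ do collapse, after one application of $h_1h_2=\id$, $h_1^2=h_2$ and after transporting the interval of validity, to \eqref{eigen1}--\eqref{eigen4} for $p=3$. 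Likewise the two $\R$-extension conditions become, under the substitution, exactly the negatives of the (PF3)-maps for $k=1$ (on the nose, since $\tau_s(h_2T)\tau_s(T^{-1}h_1)=\tau_s(\id)$) and for $k=2$ (after factoring out the translation $\tau_s(T^{-1})$, which preserves smoothness on $\R$); Remark~\ref{smooth} is not actually needed there.

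The genuine gap is your treatment of the final condition. You claim the map $\bigl\{-\wt f_0 \text{ on } (0,\infty),\ \wt f_1 \text{ on } (-\infty,0)\bigr\}$ ``reduces to (PF4)'' because $\wt f_1=\tau_s(h_2)f_2$ agrees with $f_3$ near $0$ up to the eigenfunction relations. That inference is false. By \eqref{eigen2} (with $p=3$) one has $\tau_s(h_2)f_2=f_3-\tau_s(h_2T)f_3$ on $(-\infty,0)$, so the map in question differs from the (PF4)-map $\psi$ by $\bigl\{0 \text{ on } (0,\infty),\ \tau_s(h_2T)f_3 \text{ on } (-\infty,0)\bigr\}$. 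Since $h_2T$ is parabolic fixing $0$ with derivative $1$ there, this discrepancy glues smoothly across $0$ only if $\psi$ vanishes to infinite order at $0$. But (PF4) gives smoothness of $\psi$ at $0$, not flatness, and for period functions coming from Maass cusp forms flatness fails in general: by Lemma~\ref{finestructure}, $\psi(0)=-\int_0^\infty[u,R(0,\cdot)^s]$ is a period of $u$ with no reason to vanish. So your reduction does not go through --- indeed, read literally, the printed final condition of the second definition (with $\wt f_1$) would contradict the proposition itself.

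What saves the statement is that this occurrence of $\wt f_1$ must be read as $\wt f_3$: the final condition pairs the two orientations of the geodesic $i\R^+$, exactly as (PF4) pairs $f_0$ with $f_p$, whereas $\wt f_1$ lives on the geodesic at $\tfrac13$ and is already consumed by the first $\R$-extension condition. With that reading, the final condition under your substitution is \emph{literally} (PF4), because $\wt f_0=f_0$ and $\wt f_3=f_3$, and no appeal to the eigenfunction relations is needed or available. Once this is fixed, your remaining steps (linearity, and the inverse map $f_0=\wt f_0$, $f_1=\tau_s(h_2T)\wt f_2$, $f_2=\tau_s(h_1)\wt f_1$, $f_3=\wt f_3$ with the same checks run backwards) complete the proof.
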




\providecommand{\bysame}{\leavevmode\hbox to3em{\hrulefill}\thinspace}
\providecommand{\MR}{\relax\ifhmode\unskip\space\fi MR }
\providecommand{\MRhref}[2]{%
  \href{http://www.ams.org/mathscinet-getitem?mr=#1}{#2}
}
\providecommand{\href}[2]{#2}

\end{document}